\newcommand{\X}{{\mathcal X}}
\newcommand{\PP}{{\mathcal P}}
\newcommand{\F}{{\mathcal F}}
\newcommand{\poly}{{\text{poly}}}
\newtheorem{theorem}{Theorem}[section]
\newtheorem{lemma}[theorem]{Lemma}
\newtheorem{claim}{Claim}[theorem]
\newtheorem{conjecture}[theorem]{Conjecture}
\newtheorem{question}[theorem]{Question}
\newenvironment{subproof}[1][Proof]{\begin{proof}[#1]}{\end{proof}}
\title{Tree-width of a graph excluding an apex-forest or a wheel as a minor}
\author{Chun-Hung Liu\thanks{chliu@tamu.edu. Department of Mathematics, Texas A\&M University, USA. Partially supported by NSF under CAREER award DMS-2144042.}
\and 
Youngho Yoo\thanks{yyoo2@alaska.edu. Department of Mathematics and Statistics, University of Alaska Fairbanks, USA.}}
\date{\today}
\begin{document}

\maketitle

\abstract{
The Grid Minor Theorem states that for every planar graph $H$, there exists a smallest integer $f(H)$ such that every graph with tree-width at least $f(H)$ contains $H$ as a minor. The only known lower bounds on $f(H)$ beyond the trivial bound $f(H)\geq |V(H)|-1$ come from the maximum number of disjoint cycles in $H$.
In this paper, we study $f(H)$ for planar graphs $H$ with no two disjoint cycles.
We prove that $f(H)=|V(H)|-1$ for every apex-forest $H$. 
This result improves a bound of Leaf and Seymour and contains all known large graphs $H$ meeting the trivial lower bound to our knowledge.
We also prove that $f(H)\leq \max\{\tfrac32|V(H)|-\tfrac92,|V(H)|-1\}$ for every wheel $H$.
}

\section{Introduction}

Tree-width is a measure of a graph's proximity to a tree and is fundamental to graph structure and algorithms. 
The Grid Minor Theorem of Robertson and Seymour \cite{robertson1991graph} states that for every positive integer $k$, every graph with sufficiently large tree-width contains the $k\times k$-grid $G_k$ as a minor.
The same conclusion holds for every planar graph $H$ since $H$ is a minor of $G_k$ for some $k=O(|V(H)|)$ as shown by Robertson, Seymour, and Thomas \cite{robertson1994quickly}. On the other hand, such a conclusion does not hold for any nonplanar graph $H$ because $G_k$ has tree-width $k$ but does not contain $H$ as a minor for any $k$ since $G_k$ is planar. 

For a planar graph $H$, let $f(H)$ denote the smallest integer such that every graph with tree-width at least $f(H)$ contains $H$ as a minor. 
While the original bound on $f(G_k)$ from the Grid Minor Theorem  \cite{robertson1991graph} was enormous, significant strides have been made over the years towards tighter bounds \cite{chekuri2016polynomial, chuzhoy2015excluded, chuzhoy2016improved, chuzhoy2021towards, kawarabayashi2020linear, leaf2015tree, robertson1994quickly} and currently the best known upper bound is $f(G_k)=O(k^9\poly\log k)$ by Chuzhoy and Tan \cite{chuzhoy2021towards}. Hence, for every planar graph $H$, we have $f(H)=O(|V(H)|^9\poly\log|V(H)|)$.
The best known lower bound on $f(G_k)$ comes from the existence of expander graphs $G$ with tree-width $\Omega(|V(G)|)$ and girth $\Omega(\log|V(G)|)$ as observed in \cite{robertson1994quickly}, which implies that if a planar graph $H$ has $t$ (vertex-)disjoint cycles, then $f(H)=\Omega(t\log t)$. 
In particular, this gives $f(G_k)=\Omega(k^2\log k^2)=\Omega(|V(G_k)|\log |V(G_k)|)$.

There is also a trivial lower bound $f(H)\geq |V(H)|-1$ for every planar graph $H$; the complete graph on $|V(H)|-1$ vertices has tree-width $|V(H)|-2$ but does not contain $H$ as a minor.
This trivial lower bound has a connection to Hadwiger's conjecture on graph coloring.
Seymour \cite{seymour2016hadwiger} asked to ascertain the graphs $H$ for which every graph with no $H$ minor is properly $(|V(H)|-1)$-colorable.
Hadwiger's conjecture is equivalent to the statement that all graphs are positive answers to Seymour's question.
Since graphs of tree-width at most $w$ are $w$-degenerate and hence $(w+1)$-colorable, if $f(H) = |V(H)|-1$, then every graph with no $H$ minor is properly $(|V(H)|-1)$-colorable and provides a positive answer to Seymour's question.

This paper is a step towards a characterization of the graphs $H$ with $f(H) = |V(H)|-1$.
Some small graphs $H$ are known to satisfy $f(H)=|V(H)|-1$.
Trivially, we have $f(K_t)=|V(K_t)|-1$ for $t \leq 3$.
It is well-known (following from a result of Dirac \cite{dirac1952property}) that $f(K_4)=3=|V(K_4)|-1$, and Dieng \cite{dieng} showed that $f(K_{2,4})=5=|V(K_{2,4})|-1$. 
These results imply that every graph $H$ that has at most four vertices or is a spanning subgraph of $K_{2,4}$ satisfies $f(H)=|V(H)|-1$.

In addition, existing literature on the structure of graphs excluding a small graph as a minor provide several other instances of graphs $H$ with $f(H)=|V(H)|-1$.
We assume that all graphs are simple unless explicitly stated otherwise.
For a graph $H$, suppose $\F(H)$ is a set of graphs such that every $H$ minor free graph has a tree-decomposition such that every torso is a minor of a graph in $\F(H)$. Then the tree-width of every $H$ minor free graph is at most the maximum tree-width of a graph in $\F(H)$; in particular, if every graph in $\F(H)$ has tree-width at most $|V(H)|-2$, then $f(H)=|V(H)|-1$. Such sets $\F(H)$ are known for the following graphs $H$:
    \begin{itemize}
        \item $H=K_5^-$, where $K_t^-$ is the graph obtained from $K_t$ by deleting an edge \cite[Theorem 3.3]{DING2013355} (originally due to \cite{Wagner});
        
        \item $H$ is the graph obtained from the prism by adding an edge, where the \emph{prism} is the graph obtained from two disjoint triangles by adding a perfect matching between them \cite[Theorem 3.6]{DING2013355};

        \item $H$ is the graph obtained from the 5-wheel by adding an edge, where the \emph{$k$-wheel} is the graph obtained from a cycle of length $k$ by adding a vertex adjacent to all other vertices  \cite[Theorem 4.4]{DING2013355};

        \item $H$ is the graph obtained from the cube by contracting an edge, where the \emph{cube} is the planar graph obtained from two disjoint 4-cycles by adding a perfect matching between them \cite[Theorem 4.4]{DingLiu2012};
                
        \item $H$ is the graph obtained from the disjoint union of $K_3$ and $K_4^-$ by adding a matching of size three between them so that a vertex of degree 3 in $K_4^-$ is unmatched \cite[Theorem 4.6]{DING2013355};
        
        \item $H$ is the cube \cite[Theorems 11.1-11.4]{maharry2000characterization}; and
        
        \item $H$ is the octahedron (i.e.~$H=K_{2,2,2}$)  \cite[Theorem 1.1]{DingOctahedron}.
        
    \end{itemize} 
    
    For each graph $H$ listed above, the cited theorem describes the corresponding set $\F(H)$ of graphs.
    The cited papers do not explicitly discuss the relation between $\F(H)$ and tree-width, but it can be readily checked (we omit these proofs) that the graphs in $\F(H)$ have tree-width at most $|V(H)|-2$ by constructing tree-decompositions of width at most $|V(H)|-2$. 
    Hence, these graphs $H$ (and their spanning subgraphs) satisfy $f(H)=|V(H)|-1$.
Note that a proof of $f(K_5^-)=4$ can also be obtained by using the characterization of graphs of tree-width at most three \cite{ARNBORG19901,Satyanarayana}.
Beyond these small graphs $H$, it is known that $f(H)=|V(H)|-1$ if $H$ is a forest \cite{bienstock1991quickly, diestel1995graph, seymour2023shorter} or a cycle \cite{birmele2003tree, fellows1989search}.

Recall that if $H$ is a planar graph with $c$ disjoint cycles, then $f(H)=\Omega(c\log c)$, so the trivial lower bound $f(H)\geq |V(H)|-1$ cannot be attained by a graph $H$ with many (i.e.~more than $\Omega(\frac{|V(H)|}{\log|V(H)|})$) disjoint cycles.
It is thus natural to study $f(H)$ for planar graphs $H$ that do not contain many disjoint cycles.

Since forests $H$ (i.e.~graphs with no cycles) are already known to satisfy $f(H)=|V(H)|-1$, we address the next step of studying $f(H)$ for planar graphs $H$ with no two disjoint cycles.
Lov\'asz \cite{lovasz1965graphs} (see \cite{bollobas2004}) characterized the graphs with no two disjoint cycles; the 2-connected planar graphs among them are \emph{apex-forests} (graphs that can be made a forest by deleting at most one vertex)
 and subgraphs of subdivisions of the following multigraphs: the multigraph obtained from the wheel by duplicating any number of times the edges incident with one vertex adjacent to every other vertex, and the multigraph obtained from $K_{5}^-$ by duplicating any number of times the edges of the triangle induced by the three vertices of degree 4.  

As the first contribution of this paper, we establish that $f(H)$ meets the trivial lower bound for all apex-forests $H$.

\begin{theorem} \label{thm:apexforest}
Let $H$ be an apex-forest. Then every graph with tree-width at least $|V(H)|-1$ contains $H$ as a minor. In other words, $f(H) = |V(H)|-1$.
\end{theorem}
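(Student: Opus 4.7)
Write $H = F + v$, where $F = H - v$ is a forest on $n-1 = |V(H)|-1$ vertices, $v$ is an apex vertex, and let $S = N_H(v) \subseteq V(F)$ be its neighbours in $H$. The strategy is to designate a single vertex $u^* \in V(G)$ as the branch set of $v$ and to exhibit $F$ as a minor of $G - u^*$ in such a way that the branch set of each $s \in S$ is adjacent to $u^*$. Note that $\operatorname{tw}(G - u^*) \geq \operatorname{tw}(G) - 1 \geq n - 2 = |V(F)| - 1$ for any vertex $u^*$, so the forest case $f(F) = |V(F)| - 1$ already guarantees an $F$-minor in $G - u^*$; the entire difficulty lies in controlling the interface between that minor and $u^*$.

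I would proceed by induction on $|V(H)|$, taking a counterexample $G$ minimising $|V(G)| + |E(G)|$. A standard edge-minimality argument shows $G$ is $2$-connected. The first step is to pick a vertex $u^*$ whose neighbourhood $N = N_G(u^*)$ in $G - u^*$ is well distributed: concretely, one wants $N$ to meet every element of some bramble of order $n-1$ in $G - u^*$ (equivalently, every sufficiently central part of any tree-decomposition of $G - u^*$ of width $n-2$ is hit from outside by $N$). Because $\operatorname{tw}(G) \geq n-1$ and $G$ is $2$-connected and edge-minimal, such a $u^*$ should be available; this is where the ``extra'' unit of tree-width over what is needed to find $F$ alone is consumed.

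The heart of the argument is the following \emph{rooted forest-minor lemma}, which I would prove in parallel as a companion statement: if $F$ is a forest with distinguished $S \subseteq V(F)$, $G'$ is a graph with $\operatorname{tw}(G') \geq |V(F)| - 1$, and $N \subseteq V(G')$ meets every element of a bramble of order $|V(F)|$ in $G'$, then there is a model of $F$ in $G'$ whose $S$-branch sets all meet $N$. Given this lemma, applying it with $G' = G - u^*$ and $N = N_G(u^*)$ produces the desired $H$-model and contradicts the choice of $G$. I expect to prove the lemma by induction on $|V(F)|$, peeling off a leaf $\ell \in V(F)$ together with its unique neighbour $\ell'$, then attaching to the branch set of $\ell'$ an extra connected branch that reaches into $N$ when $\ell \in S$ (or into any unused vertex when $\ell \notin S$).

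The principal obstacle is the rooted forest-minor lemma itself: requiring specified branch sets to meet a prescribed set $N$ is a genuine strengthening of the classical forest-minor theorem, and the delicate part is the rerouting when attaching a new leaf-branch conflicts with previously placed branch sets. I would handle this by arguing at the level of the bramble in $G'$, so that the model of $F$ is built from bramble elements and rerouting inside a bramble element is always feasible. Making the choice of $u^*$ compatible with the hitting condition on $N$ will also require care, and I anticipate this being where the hypothesis that $G$ itself has tree-width at least $n-1$ (rather than merely $\operatorname{tw}(G - u^*) \geq n-2$) is decisive.
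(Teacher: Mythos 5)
Your plan is genuinely different in spirit from the paper's: where the paper works entirely on the decomposition side (it defines ``$(S,w)$-octopuses'' and inductively builds a thin one, never constructing a minor model and never singling out an apex vertex in $G$), you try to build the $F^+$-model directly by fixing a candidate apex $u^*$ and then proving a ``rooted forest-minor lemma'' to place $F$ in $G-u^*$ with branch sets touching $N_G(u^*)$. That dual viewpoint is reasonable and, if it worked, would likely be shorter. However, there is a genuine gap, and it is located exactly where you anticipated trouble: the selection of $u^*$. You assert that in a minimal counterexample some $u^*$ exists whose neighbourhood meets every element of some bramble of order $|V(F)|$ in $G-u^*$, and you give no argument. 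The most basic obstruction is quantitative: a set meeting every element of a bramble of order $k$ is a hitting set, so it has at least $k$ vertices; thus you would need $\deg_G(u^*)\geq |V(F)| = |V(H)|-1$. Nothing in ``minimal counterexample, $2$-connected, $\mathrm{tw}(G)\geq |V(H)|-1$'' forces this. In fact $F^+$-minor-free graphs have edge density bounded linearly in $|V(H)|$, so the \emph{minimum} degree is at most $O(|V(H)|)$, and there is no mechanism in your argument to locate a high-degree vertex whose neighbourhood is moreover spread across a bramble rather than clustered locally. The ``extra unit of tree-width'' you invoke does not buy this; it only guarantees that $G-u^*$ still has tree-width $\geq |V(F)|-1$, which is a statement about $G-u^*$, not about where $N_G(u^*)$ sits relative to brambles of $G-u^*$.

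The companion rooted forest-minor lemma is also left as a sketch (``peel off a leaf, reroute inside a bramble element''), and even if true its proof is nontrivial; but I regard that as a secondary issue. The primary issue is that the existence of $u^*$ is unsupported and is not a consequence of minimality in any way you have indicated. By contrast, the paper sidesteps the need to choose an apex in $G$ at all: it observes (Lemma~\ref{lem:thinoctopusbase}) that if $G[S]$ already realizes $F$, then no component of $G-S$ sees all of $S$ (else contracting it produces the apex), and this single local consequence of $F^+$-minor-freeness is bootstrapped into a full tree-decomposition through the octopus machinery and Menger-type relinking (Lemma~\ref{lem:outopuslinked}). If you want to salvage your route, the missing ingredient is precisely a proof that the hitting vertex $u^*$ exists, and I do not see how to get it without essentially redoing the paper's decomposition argument.
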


Theorem \ref{thm:apexforest} generalizes the known results that $f(H)=|V(H)|-1$ when $H$ is a tree or a cycle, which are the only classes of graphs $H$ with $|V(H)|\geq 9$ previously known to satisfy $f(H)=|V(H)|-1$, to our knowledge.
Theorem \ref{thm:apexforest} also improves the previously best-known bound $f(H)\leq \frac32|V(H)|-2$ for apex-forests $H$ by Leaf and Seymour \cite{leaf2015tree}.
As stated above, every graph $H$ with $f(H)=|V(H)|-1$ gives an optimal bound for the degeneracy of $H$ minor free graphs and gives a positive answer to Seymour's question on graph coloring.
So Theorem \ref{thm:apexforest} implies that if $H$ is an apex-forest, then every $H$ minor free graph is $(|V(H)|-2)$-degenerate, which recovers a special case of a result of Liu and Yoo \cite{liu2024tight} on the degeneracy of graphs that do not contain a ``contractibly orderable graph'' as a minor.
Moreover, by \cite{BODLAENDER199792}, for every positive integer $k$, every graph that allows $k$-label Interval Routing Schemes under dynamic cost edges is $K_{2,2k+1}$ minor free and hence has tree-width at most $f(K_{2,2k+1})-1 \leq 2k+1$ by Theorem \ref{thm:apexforest}.
We omit the definitions and background related to $k$-label Interval Routing Schemes and refer interested readers to \cite{BODLAENDER199792}.

Recall that the 2-connected planar graphs $H$ with no two disjoint cycles other than apex-forests are subgraphs of subdivisions of multigraphs obtained from $K_5^-$ or a wheel by duplicating certain edges, and that $f(K_5^-) = |V(K_5^-)|-1$.

The second contribution of this paper provides a new upper bound for $f(H)$ when $H$ is a wheel. 

\begin{theorem} \label{thm:wheel}
Let $H$ be a wheel.
Then every graph with tree-width at least $\max\{\frac32|V(H)|-\frac{9}{2},|V(H)|-1\}$ contains $H$ as a minor.
In other words, $f(H) \leq \max\{\frac32|V(H)|-\frac{9}{2},|V(H)|-1\}$, and if $|V(H)| \leq 7$, then $f(H)=|V(H)|-1$.
\end{theorem}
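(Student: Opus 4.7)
I address the small cases $|V(H)| \leq 6$ first, then the general bound. For $W_3 = K_4$, $f(K_4) = 3$ is classical. For $W_4$, observe that $W_4$ is a subgraph of $K_5^-$: take any degree-$4$ vertex of $K_5^-$ as the hub and check that the remaining four vertices form $K_4^-$, which contains a $4$-cycle. Hence $f(W_4) \leq f(K_5^-) = 4$. For $W_5$, $W_5$ is a subgraph of $W_5 + e$, so $f(W_5) \leq f(W_5+e) = 5$ by the result cited in the introduction. Combined with the trivial lower bound, $f(W_k) = k$ for $k \in \{3,4,5\}$. The remaining small case $|V(H)| = 7$, i.e., $k = 6$, is covered by the general bound below, which at $k = 6$ evaluates to $f(W_6) \leq 6$.

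For the general bound $f(W_k) \leq \frac{3k-6}{2}$ with $k \geq 6$, let $G$ be a graph of tree-width at least $\frac{3k-6}{2}$. The fan $F_k$ obtained from $W_k$ by deleting one rim edge is an apex-forest on $k+1$ vertices (the hub is the apex and the rim becomes a path). Since $\frac{3k-6}{2} \geq k$ whenever $k \geq 6$, Theorem~\ref{thm:apexforest} supplies an $F_k$-minor of $G$ with branch sets $B_0$ (hub) and $B_1, \ldots, B_k$ (rim path), with $B_0$ adjacent to every $B_i$ and $B_i$ adjacent to $B_{i+1}$ for $i = 1, \ldots, k-1$. To upgrade this $F_k$-minor to a $W_k$-minor, it suffices to find a path $P$ in $G$ from $B_1$ to $B_k$ disjoint from $B_0 \cup B_2 \cup \cdots \cup B_{k-1}$.

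I would select the $F_k$-minor minimizing a suitable complexity measure such as $|B_0| + \sum_{i=2}^{k-1}|B_i|$, so that if the required closing path $P$ does not exist, then $S := B_0 \cup B_2 \cup \cdots \cup B_{k-1}$ is a minimal $B_1$-$B_k$ separator of $G$. The minor-minimality should force the interior rim branch sets $B_2, \ldots, B_{k-1}$ to be single vertices (otherwise a shortcut inside $B_i$ allows reducing the measure), so the dominant contribution to $|S|$ is $|B_0|$. One then argues that $G$ admits a tree-decomposition obtained by decomposing across $S$ with width strictly less than $\frac{3k-6}{2}$, contradicting the hypothesis. The main obstacle is controlling $|B_0|$: a priori the hub branch set may be large, and the extra $\frac{k-6}{2}$ tree-width budget beyond the $k$ threshold of Theorem~\ref{thm:apexforest} must be exactly what is needed to absorb its contribution to $|S|$. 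Handling this likely requires an exchange argument on $B_0$, shrinking it by rerouting spokes, or a more refined minor-minimality condition that couples the size of $B_0$ with the structure of its attachments to the rim. I expect this balancing step to be the technically hardest part of the proof.
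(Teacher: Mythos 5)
Your handling of the small cases $k\in\{3,4,5\}$ is correct and is a slick way to dispatch them: $W_4\subseteq K_5^-$ (the four non-hub vertices of $K_5^-$ span $K_4^-$, which contains a $4$-cycle) and $W_5\subseteq W_5+e$ both check out, so $f(W_k)=k$ for $k\leq 5$ follows from cited results plus the trivial lower bound. This is not how the paper proceeds --- the paper's inductive argument handles all $k\geq 3$ uniformly --- but it is a valid shortcut.

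Your general strategy, however, is fundamentally different from the paper's and has a genuine gap that you yourself flag. You propose to use Theorem~\ref{thm:apexforest} to extract a fan ($W_k$ minus one rim edge) as a minor, and then close the rim by a path $P$ from $B_1$ to $B_k$ avoiding $B_0\cup B_2\cup\cdots\cup B_{k-1}$. If no such path exists, you want $S=B_0\cup B_2\cup\cdots\cup B_{k-1}$ to be a small separator and decompose across it. The problem is that nothing bounds $|B_0|$. Minimizing $|B_0|+\sum_{i=2}^{k-1}|B_i|$ does not control $|B_0|$ because $B_0$ must stay connected and remain adjacent to all $k$ rim branch sets; an ``exchange'' that shrinks $B_0$ risks disconnecting it from some $B_i$, and there is no reason the minimum of this quantity is $O(k)$. (Forcing $B_2,\dots,B_{k-1}$ to be singletons is plausible via standard minor-minimality but is beside the point.) Without a bound on $|B_0|$, the separator $S$ can be arbitrarily large and the ``decompose across $S$'' step gives no contradiction. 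There is also a second, independent difficulty you do not address: even with $|S|$ small, turning a $B_1$--$B_k$ separator into a full tree-decomposition of $G$ requires a recursive framework with a carefully chosen invariant, since the pieces of $G-S$ still have to be decomposed and each piece carries part of the fan. The paper never encounters either issue because it never takes a minor model at all: it proves, by induction on the lexicographic order of $(|V(G)|,\,|V(G)|-|V(C)|)$, that any graph with no $k$-wheel minor admits a $V(C)$-rooted tree-decomposition of bag size at most $\max\{\tfrac32 k-3,k\}$ for every cycle-or-edge $C$ on at most $k-1$ vertices. The decomposition is built explicitly around a cycle $C$, a carefully chosen path $P'$ joining two consecutive vertices of $C$, a largest component $M$ of $G-(V(C)\cup V(P'))$, and combinatorial objects (``intervals,'' ``jumps,'' ``bad vertices'') living on $V(C)\cup V(P')$; the $\tfrac32 k$ factor arises from counting bad vertices, not from absorbing a hub branch set. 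So what you identify as the ``technically hardest part'' is indeed where your outline breaks, and I do not see how to repair it within a fan-closure framework; the paper sidesteps it by adopting a genuinely different inductive decomposition scheme.
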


Theorem \ref{thm:wheel} improves the earlier bound $f(H)\leq 36|V(H)|-38$ of Raymond and Thilikos \cite{raymond2017low} and the more recent bound $f(H)\leq 2|V(H)|+18\left\lceil \frac{1+\sqrt{2|V(H)|-1}}{4}\right\rceil - 9$ by Gollin, Hendrey, Oum, and Reed \cite{gollin2024linear}.
As $K_4$ is a wheel, Theorem \ref{thm:wheel} recovers the classical result that every $K_4$ minor free graph has tree-width at most two.

In light of Lov\'asz's characterization and the fact that every connected planar graph with no two disjoint cycles can be obtained from a 2-connected one by repeatedly adding leaves, we pose the following questions.
\begin{question}\label{q:generateH'}
Let $H$ be a planar graph. 
\begin{enumerate}
    \item If $H'$ is obtained from $H$ by subdividing an edge of $H$ once, then is it true that $f(H')=f(H)+1$?
    \item If $H'$ is obtained from $H$ by duplicating an edge of $H$ once and subdividing the new edge once, and if the maximum number of disjoint cycles in $H'$ is the same as that in $H$, then is it true that $f(H')=f(H)+1$?
    \item If $H'$ is obtained from $H$ by adding a new vertex adjacent to one vertex in $H$, then is it true that $f(H')=f(H)+1$?
\end{enumerate}
\end{question}
If the answers to these three questions are all positive, and if Theorem \ref{thm:wheel} could be improved to $f(H)\leq |V(H)|-1$ for all wheels $H$, then Lov\'asz's characterization (together with Theorem \ref{thm:apexforest} and the result $f(K_5^-)=4$) would imply that $f(H)=|V(H)|-1$ for every planar graph $H$ with no two disjoint cycles. 

Since the only known non-trivial lower bounds on $f(H)$ come from graphs $H$ with many disjoint cycles, a natural question is whether the excess factor of $f(H)$ beyond the trivial lower bound $f(H)\geq |V(H)|-1$ is bounded by a function of the maximum number of disjoint cycles in $H$. 

\begin{conjecture} \label{conj:boundg(c)}
    For every nonnegative integer $c$, there exists an integer $g(c)$ such that $f(H)\leq g(c)\cdot |V(H)|$ for every planar graph $H$ with no $c+1$ disjoint cycles. 
\end{conjecture}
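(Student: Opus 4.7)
\textbf{Proof proposal for Conjecture \ref{conj:boundg(c)}.} The plan is to combine the Erd\H{o}s--P\'osa theorem with an induction on the feedback vertex set size, using Theorem \ref{thm:apexforest} as the base.

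Since $H$ has no $c+1$ vertex-disjoint cycles, the Erd\H{o}s--P\'osa theorem for cycles (with Simonovits's improved bound) provides a feedback vertex set $S \subseteq V(H)$ with $|S| \leq \tau(c) = O(c\log c)$, so $H - S$ is a forest. Call $H$ an \emph{$s$-apex-forest} if there is some $S \subseteq V(H)$ with $|S| \leq s$ and $H - S$ a forest; then $H$ is a $\tau(c)$-apex-forest. The conjecture therefore reduces to the following auxiliary claim: for every $s \geq 0$, there exists an integer $g^*(s)$ such that $f(H) \leq g^*(s) \cdot |V(H)|$ for every planar $s$-apex-forest $H$. The conjecture follows by setting $g(c) := g^*(\tau(c))$.

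I would prove this auxiliary claim by induction on $s$. The case $s = 0$ is the classical forest bound and the case $s = 1$ is Theorem \ref{thm:apexforest}; both give $g^*(s) = 1$. For the inductive step with $s \geq 2$, let $H$ be a planar $s$-apex-forest on $n$ vertices with feedback vertex set $S$ of size $s$. Fix $v \in S$. Then $H - v$ is a planar $(s-1)$-apex-forest on $n-1$ vertices, and by the induction hypothesis $f(H - v) \leq g^*(s-1)(n-1)$. The plan is then to establish a vertex-insertion inequality
\[
f(H) \leq f(H-v) + C(s) \cdot n
\]
for some $C(s)$ depending only on $s$. Iterating this $s$ times collapses to the forest base case and yields $g^*(s) = 1 + \sum_{i=1}^{s} C(i) = O_{s}(1)$, hence $g(c) = O_{c}(1)$ as required.

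The main obstacle is the vertex-insertion inequality. Concretely, given a graph $G$ of tree-width at least $f(H-v) + C(s)\cdot n$ and an inductively-obtained minor model of $H-v$ in $G$, we need to find a connected branch set for $v$ that is disjoint from the existing branch sets and adjacent to the branch sets of every neighbor of $v$ in $H$. My proposed approach would use the tangle and linkage machinery of Robertson and Seymour: tree-width $T$ produces a tangle of order $\Omega(T)$, and by forcing the minor model of $H-v$ to live on the ``small'' side of every low-order separation of the tangle, the remaining highly-connected region can be combined with the disjoint-paths theorem to route $\deg_H(v)$ internally disjoint paths from this region to the branch sets of the $H$-neighbors of $v$; these paths together form the branch set for $v$. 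Since $H$ is planar, an alternative route is via the flat wall theorem: either $G$ contains $K_n$ as a minor (and $H \subseteq K_n$ trivially), or $G$ has a large flat wall, and the nearly-planar structure near this wall provides enough room to embed the additional branch set for $v$. Carrying out either approach with only a linear (in $|V(H)|$) loss of tree-width per vertex-insertion step is the main technical hurdle, and is the reason why the conjecture has so far remained open.
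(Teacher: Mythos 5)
You are attempting to prove an open conjecture: the paper itself offers no proof of Conjecture~\ref{conj:boundg(c)}, only the evidence that it holds for $c=0$ (forests) and, via \cite{gollin2024linear,hatzel2025bounds}, for disjoint unions of cycles. Your reduction via Erd\H{o}s--P\'osa to $s$-apex-forests is sound, and your base cases ($s=0,1$) are correct, but the entire argument hinges on the ``vertex-insertion inequality'' $f(H)\leq f(H-v)+C(s)\cdot|V(H)|$, which is never established and is, as you yourself note at the end, precisely the open obstacle. Neither of the two sketches you give can close it with a \emph{linear} loss: routing $\deg_H(v)$ disjoint paths from a high-order tangle to an existing minor model via Robertson--Seymour linkage machinery is exactly the Grid Minor Theorem bottleneck, which at present costs a high-degree polynomial in $|V(H)|$ (currently $O(|V(H)|^9\,\mathrm{poly}\log|V(H)|)$), and the flat wall theorem likewise gives polynomial, not linear, bounds. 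So you have reformulated the conjecture rather than proved it, and the reformulation is, if anything, stronger than needed, since you also ask for control per feedback vertex.

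It is worth contrasting your route with the one the paper gestures at following the conjecture. The paper suggests attacking $c=1$ via weaker versions of Question~\ref{q:generateH'}: showing that subdividing an edge, duplicating-then-subdividing an edge, or attaching a pendant vertex increases $f$ by only $O(1)$. This is a much finer surgery than deleting a feedback vertex, because under $H\mapsto H'$ both $|V(H')|-|V(H)|$ and the local change to $H$ are $O(1)$, so an argument can plausibly track $f$ additively. Your step $H\mapsto H-v$ changes $H$ globally (by removing a possibly high-degree vertex), which is why you are forced into the heavy linkage/flat-wall machinery with its nonlinear losses. If you want to pursue your framework, the realistic target is not a linear vertex-insertion bound for arbitrary $v$, but rather a bound with loss $O(\deg_H(v))$ plus strong control over how the minor model of $H-v$ interacts with a tangle, and even the $c=1$ case of this appears to require ideas beyond what is currently available.
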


Note that Conjecture \ref{conj:boundg(c)} was also independently proposed by Bruce Reed and David Wood.

Recall that $f(H)=|V(H)|-1$ for every forest $H$, so Conjecture \ref{conj:boundg(c)} is true for $c=0$ with $g(c)=1$.
A possible approach for the case $c=1$ would be to prove weaker forms of the three questions in Question \ref{q:generateH'} allowing $f(H')=f(H)+O(1)$. 

Conjecture \ref{conj:boundg(c)} has recently been shown to be true when $H$ is the disjoint union of $c$ cycles. Note that the case $c=1$ follows from the known result that $f(H)=|V(H)|-1$ for every cycle $H$. 
Gollin, Hendrey, Oum, and Reed \cite{gollin2024linear} showed that if $c=2$, then $f(H)=(1+o(1))|V(H)|$ and that for all $c\geq 3$, $f(H)=\frac32|V(H)|  +O(c^2\log c)$.
Hatzel, Liu, Reed, and Wiederrecht \cite{hatzel2025bounds} showed that for every $c \geq 2$, $f(H)= O(\log c\cdot |V(H)| + c\log c\cdot \log |V(H)|)$ which implies that Conjecture \ref{conj:boundg(c)} holds for disjoint unions of cycles with $g(c)=O(c\log c)$. 

\section{Preliminaries}

Let $G$ be a graph.
A {\it tree-decomposition} of $G$ is a pair $(T,\X)$ where $T$ is a tree and $\X=(X_t: t \in V(T))$ is a collection of sets such that
    \begin{itemize}
        \item $\bigcup_{t \in V(T)}X_t=V(G)$,
        \item for every $e \in E(G)$, there exists $t \in V(T)$ such that $X_t$ contains both ends of $e$, and
        \item for every $v \in V(G)$, the set $\{t \in V(T): v \in X_t\}$ induces a connected subgraph of $T$.
    \end{itemize}
For every $t \in V(T)$, the set $X_t$ is called the {\it bag} at $t$.
The {\it width} of $(T,\X)$ is $\min_{t \in V(T)}|X_t|-1$.
The {\it tree-width} of $G$ is the minimum width of a tree-decomposition of $G$.
A \emph{path-decomposition} is a tree-decomposition $(T,\X)$ such that $T$ is a path.

A {\it rooted tree} $T$ is a tree with a specified vertex called the {\it root}.
For every $t \in V(T)$, a {\it descendant} of $t$ is a vertex $t'$ of $T$ such that the unique path in $T$ between $t'$ and the root contains $t$.
Note that every vertex of $T$ is a descendant of itself.
A {\it proper descendant} of $t$ is a descendant of $t$ not equal to $t$.
Every proper descendant of $t$ adjacent to $t$ is called a {\it child} of $t$.
For every $t \in V(T)$ that is not the root, the {\it parent} of $t$ is the unique vertex $p$ such that $t$ is a child of $p$.
A tree-decomposition $(T,\X)$ of a graph is a {\it rooted tree-decomposition} if $T$ is a rooted tree. For a vertex set $S\subseteq V(G)$, an \emph{$S$-rooted tree-decomposition} is a rooted tree-decomposition $(T,\X)$ such that the bag $X_r$ at the root $r$ of $T$ is equal to $S$.

Let $(T,\X)=(T,(X_t:t\in V(T)))$ be a tree-decomposition of a graph $G$ and, for each $i$ in a finite set $I$, let $(T^i,\X^i)=(T^i,(X_t^i:t\in V(T^i))$ be a tree-decomposition of a graph $G_i$ such that $V(G)\cap V(G_i)$ is a bag of both $(T,\X)$ and $(T^i,\X^i)$, say $X_{z^i}$ and $X_{r^i}^i$ respectively. 
Suppose further that $V(G_i)-V(G)$ and $V(G_j)-V(G)$ are disjoint for all distinct $i,j\in I$.
Let $T^*$ be the tree obtained from the disjoint union $T \cup \bigcup_{i\in I}T_i$ by identifying $z^i$ with $r^i$ for each $i\in I$, and let $\X^*=(X_t^*:t\in V(T^*))$ where $X_t^*=X_t$ if $t\in V(T)$ and $X_t^*=X_t^i$ if $t\in V(T^i)$ for some $i\in I$.
It is easy to see that $(T^*,\X^*)$ is well-defined and it is a tree-decomposition of $G\cup \bigcup_{i\in I}G_i$. We say that $(T^*,\X^*)$ is obtained by \emph{attaching $(T^i,\X^i)$ to $(T,\X)$ along $X_{z^i}$ for each $i\in I$}.

For a graph $F$, we denote by $F^+$ the graph obtained from $F$ by adding a new vertex, called the \emph{apex}, adjacent to every vertex in $F$.

A \emph{separation} of a graph $G$ is an ordered pair $(A,B)$ with $A,B\subseteq V(G)$ such that $A\cup B=V(G)$ and there does not exist an edge of $G$ with one end in $A-B$ and the other in $B-A$.

We say that a graph $G$ contains a graph $H$ as a \emph{minor} if a graph isomorphic to $H$ can be obtained from $G$ by a sequence of vertex deletions, edge deletions, and edge contractions.

\section{Apex-forests}
To prove Theorem \ref{thm:apexforest} for apex-forests $H$, we may assume without loss of generality, by possibly adding edges to $H$, that $H=F^+$ for some tree $F$.
Given a graph $G$ that does not contain $F^+$ as a minor, we will show that $G$ has tree-width at most $|V(F)|-1$ by inductively building a rooted tree-decomposition $(T,\X)$ of $G$ such that the subgraph of $G$ induced by the root bag $S$ contains a spanning subgraph isomorphic to a subtree of $F$. This tree-decomposition will not necessarily have width at most $|V(F)|-1$, but we will require that every bag of size greater than $|V(F)|$ is a leaf bag and has a parent bag of size at most $|S|-1$. 

Let $G$ be a graph, let $S \subseteq V(G)$, and let $w$ be a nonnegative integer.
An {\it $(S,w)$-octopus} of $G$ is an $S$-rooted tree-decomposition $(T,(X_t: t \in V(T)))$ of $G$ such that
    \begin{itemize}
        \item for every $t \in V(T)$ with $|X_t| \geq w+1$, $t$ is a non-root leaf and $|X_p| \leq |S|$, where $p$ is the parent of $t$.
    \end{itemize}
An $(S,w)$-octopus of $G$ is {\it trivial} if every bag is either a subset of $S$ or equal to $V(G)$.
A {\it wrist} of an $(S,w)$-octopus $(T,(X_t: t \in V(T)))$ is a node $t$ of $T$ such that $|X_c| \geq w+1$ for some child $c$ of $t$; a wrist $t$ is {\it thick} if $|X_t|=|S|$.
An $(S,w)$-octopus is {\it thin} if it has no thick wrist.

\begin{lemma} \label{lem:thinoctopuseasy}
Let $G$ be a graph and let $S$ be a subset of $V(G)$. 
If $N_G(V(C)) \subsetneq S$ for every component $C$ of $G-S$, then there exists a thin $(S,|S|)$-octopus of $G$.
\end{lemma}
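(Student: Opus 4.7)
The plan is to construct the desired octopus explicitly by attaching, for each component of $G-S$, a short branch of length two to the root. The key observation is that the hypothesis $N_G(V(C)) \subsetneq S$ provides exactly the ``slack'' needed so that each potentially large leaf can be hung from an intermediate bag of size strictly less than $|S|$, thereby avoiding any thick wrist.

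Concretely, for each component $C$ of $G-S$, I would set $S_C := N_G(V(C))$, which by hypothesis satisfies $|S_C| \leq |S|-1$. Build $T$ by taking a root $r$ and, for each component $C$, attaching a path $r - u_C - v_C$, with bags $X_r = S$, $X_{u_C} = S_C$, and $X_{v_C} = V(C) \cup S_C$. Verifying that $(T,\X)$ is a tree-decomposition of $G$ is routine: the union of the bags is $V(G)$; every edge of $G$ either lies inside $S$ (and is covered by $X_r$), lies inside some $V(C)$, or runs between $V(C)$ and $S_C$ (and in the latter two cases is covered by $X_{v_C}$), since there are no edges between distinct components of $G-S$ and no edges from $V(C)$ to $S \setminus S_C$; finally, each vertex of $S$ occurs either only at $r$ or along the connected path $r - u_C - v_C$ (and along several such paths simultaneously, all meeting at $r$), while each vertex of $V(C)$ occurs only at $v_C$.

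It then remains to check the two quantitative conditions. For the $(S,|S|)$-octopus property, note that the only bag whose size might exceed $|S|$ is $X_{v_C}$; this bag is a non-root leaf by construction, and its parent bag $X_{u_C} = S_C$ has size at most $|S|-1 \leq |S|$. For thinness, observe that the root's children are the $u_C$'s with $|X_{u_C}| \leq |S|-1$, so the root is not a wrist, and the only other potential wrists are the $u_C$'s themselves, each of which has bag size at most $|S|-1 < |S|$ and is therefore not thick.

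There is no real obstacle here; the lemma is essentially packaging the hypothesis $N_G(V(C)) \subsetneq S$ into the octopus framework. The only thing to be careful about is the edge case $V(G) = S$, in which $G-S$ has no components and the single-bag decomposition $X_r = S$ is trivially a thin $(S,|S|)$-octopus.
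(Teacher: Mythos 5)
Your proof is correct and follows essentially the same construction as the paper: a two-level branch off the root $S$ for each component $C$ of $G-S$, with intermediate bag $N_G(V(C))$ of size at most $|S|-1$ so that no wrist can be thick. Your leaf bag $V(C)\cup N_G(V(C))$ is in fact the cleaner choice: the paper writes the leaf bag as $V(C)$, which would fail to cover edges between $C$ and $N_G(V(C))$, so your version patches a small slip in the paper's proof while keeping the same idea.
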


\begin{proof}
For each component $C$ of $G-S$, let $T_C$ be a rooted tree consisting of the root $r_C$ with one child $c_C$, and let $X_{r_C}=N_G(V(C))$ and $X_{c_C}=V(C)$.
Define $T$ to be the rooted tree obtained from the disjoint union of $T_C$ over all components $C$ of $G-S$ by adding the new root $r$ and edges such that each $r_C$ is a child of $r$.
Let $X_r=S$.
Then $(T,(X_t: t \in V(T)))$ is an $(S,|S|)$-octopus of $G$ such that every wrist is $r_C$ for some component $C$ of $G-S$.
Note that $|X_{r_C}|=|N_G(V(C))| \leq |S|-1$ for every component $C$ of $G-S$ by the assumption that $N_G(V(C))\subsetneq S$.
Hence, $(T,(X_t: t \in V(T)))$ is a thin $(S,|S|)$-octopus of $G$.
\end{proof}

\begin{lemma} \label{lem:thinoctopusbase}
Let $F$ be a tree.
Let $G$ be a graph that does not contain $F^+$ as a minor and let $S$ be a subset of $V(G)$ such that $G[S]$ contains a spanning subgraph isomorphic to $F$.
Then there exists a thin $(S,|V(F)|)$-octopus of $G$.
\end{lemma}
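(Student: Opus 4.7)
\medskip
\noindent\textbf{Proof proposal.} The plan is to reduce the statement to Lemma \ref{lem:thinoctopuseasy} by showing that no component of $G-S$ can be adjacent to all of $S$. First observe that since $G[S]$ contains a spanning subgraph isomorphic to $F$, we have $|S|=|V(F)|$, so a thin $(S,|S|)$-octopus is the same as a thin $(S,|V(F)|)$-octopus.

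Next I would argue by contradiction: suppose there is a component $C$ of $G-S$ with $N_G(V(C))=S$. Since $C$ is connected, contracting all the edges of $C$ produces a single vertex $v_C$ adjacent in the resulting minor to every vertex of $S$. Combined with the copy of $F$ that spans $S$ inside $G[S]$, this exhibits $F^+$ as a minor of $G$ (with $v_C$ playing the role of the apex), contradicting the hypothesis that $G$ is $F^+$ minor free. Hence $N_G(V(C))\subsetneq S$ for every component $C$ of $G-S$.

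With this reduction in hand, Lemma \ref{lem:thinoctopuseasy} applies directly and produces a thin $(S,|S|)$-octopus of $G$, which is the desired thin $(S,|V(F)|)$-octopus. (If $G-S$ is empty there is nothing to prove, as the trivial rooted tree-decomposition consisting of a single root bag $S$ is a thin $(S,|V(F)|)$-octopus vacuously.)

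I do not anticipate a serious obstacle here; the only step that requires a sentence of justification is the minor argument in the second paragraph, and the main conceptual content is recognising that the ``apex'' in $F^+$ can be supplied by any component of $G-S$ that touches all of $S$.
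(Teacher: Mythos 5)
Your proof is correct and essentially identical to the paper's: both contract a hypothetical component of $G-S$ touching all of $S$ into the apex of $F^+$ to derive a contradiction, then invoke Lemma~\ref{lem:thinoctopuseasy} with the observation $|S|=|V(F)|$.
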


\begin{proof}
If there exists a component $C$ of $G-S$ such that $V(C)$ is adjacent to every vertex in $S$, then the graph obtained from $G[S \cup V(C)]$ by contracting $C$ into a single vertex contains a spanning subgraph isomorphic to $F^+$, contradicting the assumption that $G$ does not contain $F^+$ as a minor.
So for every component $C$ of $G-S$, we have $N_G(V(C)) \subsetneq S$.
Since $|S|=|V(F)|$, there exists a thin $(S,|V(F)|)$-octopus of $G$ by Lemma \ref{lem:thinoctopuseasy}.
\end{proof}
 
We will use Lemma \ref{lem:thinoctopusbase} as the base case in our induction to show that, whenever $G[S]$ contains a spanning subgraph isomorphic to a subtree of $F$, there exists a thin $(S,|V(F)|)$-octopus of $G$. Note that if $|S|=1$, then a thin $(S,|V(F)|)$-octopus of a connected graph $G$ is in fact a tree-decomposition of $G$ of width $|V(F)|-1$, as desired.

\begin{lemma} \label{lem:outopuslinked}
Let $G$ be a graph and let $S$ be a subset of $V(G)$.
Let $w$ be a nonnegative integer.
Let $(T,\X)$ be a non-trivial $(S,w)$-octopus of $G$ such that the number of thick wrists is as small as possible.
Denote $\X$ by $(X_t: t \in V(T))$.
Then for every thick wrist $t$ of $(T,\X)$, there exist $|S|$ disjoint paths in $G$ from $S$ to $X_t$.
\end{lemma}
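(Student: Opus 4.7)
The plan is to argue by contradiction, using the minimality of the number of thick wrists. Suppose $t$ is a thick wrist of $(T,\X)$ such that $G$ does not contain $|S|$ disjoint paths from $S$ to $X_t$. By Menger's theorem, there exists $A \subseteq V(G)$ with $|A| < |S|$ separating $S$ from $X_t$ in $G$. The aim is to use $A$ to construct another non-trivial $(S,w)$-octopus $(T',\X')$ of $G$ with strictly fewer thick wrists than $(T,\X)$, contradicting our choice of $(T,\X)$.

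Before the construction, it helps to normalize $A$. Let $T_t$ be the subtree of $T$ rooted at $t$ and let $V_t$ be the union of the bags in $T_t$. By the tree-decomposition axioms, $X_t$ separates $V_t \setminus X_t$ from $V(G) \setminus V_t$ in $G$. Hence any $(S, X_t)$-path in $G$ can be truncated at its first intersection with $X_t$, so we may assume $A \cap (V_t \setminus X_t) = \emptyset$. Among such $A$, we further choose one for which the set $U$ of vertices reachable from $X_t \setminus A$ in $G - A$ is inclusion-minimal; this tightness will be crucial later.

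The heart of the argument is to build $(T',\X')$ by rerouting the subtree $T_t$. The rough idea is to detach $T_t$ from its parent edge, insert a new node $q$ carrying the bag $A$, and attach the modified subtree below $q$. Inside the modified $T_t$, the bag at $t$ is shrunk to a strictly smaller set whose role as the interface between $V_t \setminus X_t$ and the rest of $G$ is taken over by $A$; vertices of $X_t$ that no longer lie in the bag at $t$ are pushed down into the descendant bag where they are still required (for instance into the bag $X_c$ of the heavy child $c$), while vertices of $A \setminus X_t$ receive connected bag-subtrees through $q$. After this surgery, the new bag at $t$ has size at most $|A| < |S|$, so $t$ is no longer a thick wrist, and the heavy leaf $c$ now has a parent bag of size at most $|A| < |S|$, so the octopus condition on heavy leaves is preserved.

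The main obstacle is the bookkeeping required to verify that $(T',\X')$ is indeed a valid $(S,w)$-octopus: one must check the coverage and subtree-connectivity axioms of the tree-decomposition after the surgery, and confirm that no other bag accidentally becomes a new thick wrist. The latter is precisely where the minimality of $U$ should enter: a new thick wrist in $(T',\X')$ would, in conjunction with a new application of Menger, yield either a smaller $(S, X_t)$-separator in $G$ or a separator $A'$ with $U' \subsetneq U$, contradicting either the minimality of $|A|$ or of $U$. Once these verifications are complete, $(T',\X')$ is a non-trivial $(S,w)$-octopus of $G$ with strictly fewer thick wrists than $(T,\X)$, contradicting the minimality hypothesis and proving the lemma.
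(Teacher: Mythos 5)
Your high-level plan matches the paper's: argue by contradiction, apply Menger to a thick wrist that is not linked to $S$, and use the resulting small separator to build an octopus with fewer thick wrists. But the surgery you propose is genuinely different from the paper's, and as described it does not yield a valid tree-decomposition. The paper does \emph{not} perform a local modification near $t$; it modifies \emph{every} bag of $(T,\X)$ at once. With a separation $(A,B)$, $S\subseteq A$, $X_t\subseteq B$, $|A\cap B|<|S|$, and disjoint $(S,X_t)$-paths $P_1,\dots,P_{|A\cap B|}$ with $v_i$ the unique vertex of $P_i$ in $A\cap B$, the paper sets
$X'_u = (X_u\cap A)\cup\{v_i:\ X_u\cap B\cap V(P_i)\neq\emptyset\}$ for \emph{all} $u\in V(T)$, and hangs one new leaf with bag $B$ off the wrist. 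This global replacement is what makes the coverage and connectedness axioms go through and what guarantees $|X'_u|\le |X_u|$ for every node.

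The concrete gap in your version is the insertion of a new node $q$ with bag $A$ on the edge between $t$ and its parent $p$, while leaving all bags outside $T_t$ untouched. Consider any vertex $v\in X_t\cap X_p$ with $v\notin A$ (equivalently $v$ lies on the $X_t$-side of the separator but not on it); such $v$ may well exist, and neither of your two normalizations of $A$ rules it out. After the surgery, $v$ must still appear in $X_p$, and you propose to push $v$ down into a descendant bag of $t$; but then the bag-subtree of $v$ in $T'$ is split by $q$, since $v\notin X_q=A$. This is not a bookkeeping nuisance but a structural obstruction: to fix it one must remove $v$ from the bags outside $T_t$ as well, i.e.\ one is forced into a global modification. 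A symmetric problem arises for vertices of $A\setminus X_t$ that occur in bags outside $T_t$ but not in $X_p$: making their bag-subtree pass through $q$ again requires altering bags you declared untouched, and doing so carelessly could enlarge a bag into a new thick wrist. So the self-identified ``main obstacle'' is in fact a hole that the paper's construction is specifically designed to avoid, and the inclusion-minimality of $U$ is not the right tool to plug it.
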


\begin{proof}
Suppose to the contrary that there exists a thick wrist $z$ of $(T,\X)$ such that there does not exist $|S|$ disjoint paths in $G$ from $S$ to $X_z$.
Note that $|X_z|=|S|$ since $z$ is thick.
By Menger's theorem, there exists a separation $(A,B)$ of $G$ with $|A \cap B| \leq |S|-1$ such that $S \subseteq A$ and $X_z \subseteq B$, and such that there exist $|A \cap B|$ disjoint paths $P_1,P_2,...,P_{|A \cap B|}$ in $G$ from $S$ to $X_z$.
Note that for every $i \in [|A \cap B|]$, we have $|V(P_i) \cap A \cap B|=1$; we denote by $v_i$ the unique vertex in $V(P_i) \cap A \cap B$.

For every $t \in V(T)$, let $$X_t' = (X_t \cap A) \cup \{v_i: X_t \cap B \cap V(P_i) \neq \emptyset, i \in [|A \cap B|]\}.$$
Note that, since $X_z \subseteq B$ and $X_z \cap V(P_i) \neq \emptyset$ for every $i \in [|A \cap B|]$, we have 
\begin{align*}
    X'_z &= (X_z \cap A) \cup \{v_i: X_z \cap B \cap V(P_i) \neq \emptyset, i \in [|A \cap B|]\} \\
    &= (X_z \cap A \cap B) \cup (A \cap B) = A \cap B.
\end{align*}
Let $T'$ be the rooted tree obtained from $T$ by adding a child $c$ of $z$. Let $X'_c=B$ and $\X'=(X_t': t \in V(T'))$.

    \begin{claim} \label{lem:octopuslinkedtree:claim1}
        $(T',\X')$ is a tree-decomposition of $G$.  
    \end{claim}

    \begin{subproof}[Proof of Claim \ref{lem:octopuslinkedtree:claim1}]
Let $e\in E(G)$. If $e \in E(G[B])$, then $X'_c$ contains both ends of $e$, so we may assume that $e \not\in E(G[B])$. 
Since $(T,\X)$ is a tree-decomposition of $G$, there exists $t_e \in V(T)$ such that $X_{t_e}$ contains both ends of $e$. 
Since $e\not\in E(G[B])$, both ends of $e$ are in $A$, so $X'_{t_e} \supseteq X_{t_e} \cap A$ contains both ends of $e$.
Similarly, we have $\bigcup_{t \in V(T')}X'_t \supseteq V(G)$.

Now for each $v\in V(G)$, define $Y_v = \{t \in V(T'): v \in X'_t\}$.
To show that $(T',\X')$ is a tree-decomposition of $G$, it suffices to show that $T'[Y_v]$ is connected for all $v\in V(G)$.

If $v \in A-B$, then $Y_v = \{t \in V(T): v \in X_t\}$ induces a connected subgraph of $T'$ since $(T,\X)$ is a tree-decomposition of $G$.
If $v \in B-A$, then $Y_v=\{c\}$ induces a connected subgraph of $T'$.
So we may assume that $v=v_i$ for some $i \in [|A \cap B|]$.
This implies that
\begin{align*}
Y_v &= \{t \in V(T): v_i \in X_t\} \cup \{t \in V(T): X_t \cap B \cap V(P_i) \neq \emptyset\} \cup \{c\}\\
&= \{t \in V(T): X_t \cap B \cap V(P_i) \neq \emptyset\} \cup \{c\},   
\end{align*}
where the last inclusion holds since $v_i \in B \cap V(P_i)$.
Since $B \cap V(P_i)$ induces a connected subgraph of $G$ and $(T,\X)$ is a tree-decomposition of $G$, the set $\{t \in V(T): X_t \cap B \cap V(P_i) \neq \emptyset\}$ induces a connected subgraph of $T$.
Since $X_z \subseteq B$, $z \in \{t \in V(T): X_t \cap B \cap V(P_i) \neq \emptyset\}$.
Therefore $Y_v = \{t \in V(T): X_t \cap B \cap V(P_i) \neq \emptyset\} \cup \{c\}$ induces a connected subgraph of $T'$.
    \end{subproof}

In fact, $(T',\X')$ is an $S$-rooted tree-decomposition of $G$; indeed, since $X_r=S \subseteq A$, we have $X_r = X_r \cap A$, so 
\begin{align*}
X'_r &= (X_r \cap A) \cup \{v_i: X_r \cap B \cap V(P_i) \neq \emptyset, i \in [|A \cap B|]\} \\
&= X_r \cup \{v_i: X_r \cap A \cap B \cap V(P_i) \neq \emptyset, i \in [|A \cap B|]\} = X_r.    
\end{align*}
Since $(T,\X)$ is an $(S,w)$-octopus of $G$, we have $S=X_r=X_r'$.

To show that $(T',\X')$ is an $(S,w)$-octopus, it remains to show that for every $t\in V(T')$ with $|X_t'|\geq w+1$, we have that $t$ is a non-root leaf and $|X_p'|\leq |S|$, where $p$ is the parent of $t$.

    \begin{claim} \label{lem:octopuslinkedtree:claim2}
        For every $t \in V(T)$, we have $|X'_t| \leq |X_t|$. 
    \end{claim}

    \begin{subproof}[Proof of Claim \ref{lem:octopuslinkedtree:claim2}]
    Let $t \in V(T)$.
    For every $v \in X'_t-X_t$, we have $v=v_i$ for some $i \in [|A \cap B|]$ such that $v_i \not \in X_t \cap A$ and $X_t \cap B \cap V(P_i) \neq \emptyset$, so there exists $v_i' \in X_t \cap B \cap V(P_i)-A \subseteq X_t-X'_t$.
    Hence there exists an injection from $X'_t-X_t$ to $X_t-X_t'$.
    Therefore, $|X'_t| = |X'_t \cap X_t| + |X'_t-X_t| \leq |X'_t \cap X_t| + |X_t-X_t'| = |X_t|$.
    \end{subproof}

Let $t \in V(T')$ with $|X'_t| \geq w+1$.
By Claim \ref{lem:octopuslinkedtree:claim2}, we have either $t=c$ or $|X_t| \geq w+1$.
If $t=c$, then $t$ is a non-root leaf in $T'$, $z$ is the parent of $t$, and $|X'_z|=|A \cap B| < |S|$.
If $t \neq c$, then $t \in V(T)$ and $|X_t| \geq w+1$, so $t$ is a non-root leaf of $T$ and hence a non-root leaf of $T'$, and the parent $p$ of $t$ satisfies $|X'_p| \leq |X_p| \leq |S|$, since $(T,\X)$ is an $(S,w)$-octopus of $G$. 

Hence $(T',\X')$ is an $(S,w)$-octopus of $G$.
Note that $(T',\X')$ is non-trivial since $X_c'=B$ is a bag that is not a subset of $S$ nor equal to $V(G)$.
By Claim \ref{lem:octopuslinkedtree:claim2}, every thick wrist of $(T',\X')$ in $T$ is a thick wrist of $(T,\X)$.
Moreover, $z$ is a thick wrist of $(T,\X)$, but $z$ is not a thick wrist of $(T',\X')$ since $X'_z=A \cap B$ has size less than $|S|$.
Therefore, $(T',\X')$ is a non-trivial $(S,w)$-octopus of $G$ with fewer thick wrists than $(T,\X)$, a contradiction.
\end{proof}

\begin{lemma} \label{lem:thinoctopus}
Let $F$ be a tree and let $F'$ be a subtree of $F$.
Let $G$ be a graph that does not contain $F^+$ as a minor and let $S$ be a subset of $V(G)$ such that $G[S]$ contains a spanning subgraph isomorphic to $F'$.
Then there exists a thin $(S,|V(F)|)$-octopus of $G$.
\end{lemma}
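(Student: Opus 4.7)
I would prove the lemma by induction on $|V(F)|-|V(F')|$, with a nested induction on $|V(G)|$ in the inductive step. The base case $|V(F)|=|V(F')|$ forces $F'=F$ (a subtree of a tree $F$ with the same vertex count must equal $F$), so Lemma \ref{lem:thinoctopusbase} applies directly.

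For the inductive step, I first reduce to the case where $G-S$ is a single connected component $C_0$ with $N_G(V(C_0))=S$. Components $C$ of $G-S$ with $N_G(V(C))\subsetneq S$ are handled using Lemma \ref{lem:thinoctopuseasy} and combined at the root bag $S$; if $G-S$ has multiple components with $N_G(V(C))=S$, I apply the nested induction on $|V(G)|$ to each $G[S\cup V(C)]$ separately and combine at $S$. Since $F'\subsetneq F$ and $F$ is a tree, I choose an edge $uv\in E(F)$ with $u\in V(F')$ and $v\notin V(F')$; letting $u^*$ be the image of $u$ in $S$ under the $F'$-isomorphism, I pick $x\in V(C_0)$ adjacent to $u^*$, set $S'=S\cup\{x\}$, and take $F''=F[V(F')\cup\{v\}]$. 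Then $G[S']$ spans $F''$, and the outer inductive hypothesis applied to $(F,F'',S')$ provides a thin $(S',|V(F)|)$-octopus of $G$.

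Among all thin $(S',|V(F)|)$-octopi of $G$, I choose $(T',\mathcal{X}')$ to minimize the number of wrists with bag size $|S|$, and claim this minimum is zero. Given the claim, a thin $(S,|V(F)|)$-octopus of $G$ is obtained by prepending a new root $r$ with bag $X_r=S$ above the root $r'$ of $T'$; the new bags at $r,r'$ have sizes $|S|,|S|+1\leq |V(F)|$ and impose no big-leaf constraint, and the thinness of the new octopus follows from the claim (since any thick wrist would be inherited from a wrist of size $|S|$ in $(T',\mathcal{X}')$, of which there are none).

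To prove the claim, assume for contradiction that $(T',\mathcal{X}')$ contains a wrist $t^*$ with $|X_{t^*}'|=|S|$, and let $c^*$ be a child of $t^*$ with $|X_{c^*}'|\geq |V(F)|+1$. Apply Menger's theorem to $S$ and $X_{t^*}'$ in $G$. If the minimum separator has size less than $|S|$, the construction from the proof of Lemma \ref{lem:outopuslinked}---adding a new child of $t^*$ with bag $B$ and replacing $X_t'$ by $(X_t'\cap A)\cup\{v_i:X_t'\cap B\cap V(P_i)\neq\emptyset\}$ for the Menger separation $(A,B)$ and corresponding disjoint paths $P_i$---yields a thin $(S',|V(F)|)$-octopus with strictly fewer wrists of bag size $|S|$, contradicting minimality. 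Otherwise, $|S|$ pairwise disjoint paths from $S$ to $X_{t^*}'$ exist; these paths extend the $V(F')$-branch sets into $X_{t^*}'$, and the set $X_{c^*}'\setminus X_{t^*}'$ (of size at least $|V(F)\setminus V(F')|+1$, since $X_{c^*}'\cap X_{t^*}'$ separates it from the rest of $G$) should provide the remaining $V(F)\setminus V(F')$ branch sets together with an apex, assembling $F^+$ as a minor of $G$ and contradicting the hypothesis. The main obstacle is verifying the requisite adjacencies for this $F^+$-minor inside $G[X_{c^*}']$, which likely requires $G[X_{c^*}']$ to be connected---a property that can be preserved throughout the inductive construction by following the recipes of Lemmas \ref{lem:thinoctopuseasy} and \ref{lem:outopuslinked}.
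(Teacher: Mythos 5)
Your outer induction on $|V(F)|-|V(F')|$ with a nested induction on $|V(G)|$, the choice of $uv$ extending $F'$ to $F''$, the use of Lemmas \ref{lem:thinoctopuseasy} and \ref{lem:thinoctopusbase}, and the Menger-style reduction all match the paper's strategy. However, there is a genuine gap in the second branch of your Menger argument, and it is the crux of the lemma.

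When $|S|$ disjoint paths from $S$ to $X'_{t^*}$ exist, you propose to ``extend the $V(F')$-branch sets into $X'_{t^*}$'' and then read off the remaining branch sets of $F^+$ from $X'_{c^*}\setminus X'_{t^*}$, contradicting the $F^+$-minor-freeness of $G$. This does not work, even assuming $G[X'_{c^*}]$ is connected: the octopus places no structural constraint on the leaf bag $X'_{c^*}$ beyond its size, so it may have no subdivision/minor resembling the missing part of $F$ (for instance, $G[X'_{c^*}]$ could be a long path even when the part of $F$ you need to realize is a star). The cardinality bound $|X'_{c^*}\setminus X'_{t^*}|\geq |V(F)\setminus V(F')|+1$ gives you enough vertices, not enough adjacencies. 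What the paper does here is not construct an $F^+$ minor at all. Instead, having the $|S|$ disjoint paths $\mathcal P_t$ (guaranteed, for thick wrists, by Lemma \ref{lem:outopuslinked}), it builds a proper minor $G'_{t,c}$ of $G$ supported on $X^3_t\cup X^3_c$ together with the contracted paths; this $G'_{t,c}$ is $F^+$-minor-free, has \emph{strictly fewer vertices} than $G$ (Claim 3.5.1), and $G'_{t,c}[X^3_t]$ still spans $F'$ because the contracted paths carry the copy of $F'$ from $S$ onto $X^3_t$. One then applies the inductive hypothesis on $|V(G)|$ with the \emph{same} $F'$ to obtain a thin $(X^3_t,|V(F)|)$-octopus of $G'_{t,c}$, and attaches these to the original decomposition along the wrist bags to remove all thick wrists simultaneously. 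Your proposal is missing this inner recursion on $|V(G)|$; without it, the linked case has no resolution.

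A secondary structural difference: you aim to show a thin $(S',|V(F)|)$-octopus exists with \emph{no} wrist of bag size $|S|$, then prepend $S$. The paper instead prepends $S$ first to get a (possibly non-thin) non-trivial $(S,|V(F)|)$-octopus, minimizes the number of thick wrists directly in the class of $(S,|V(F)|)$-octopi, and removes the remaining thick wrists via the $G'_{t,c}$ recursion just described. Your reformulated claim is stronger than what the paper proves and is not obviously true, and in any case its proof would need the same recursion you are missing.
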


\begin{proof}
We proceed by induction on the lexicographic order of $(|V(F)|-|V(F')|,|V(G)|)$.
The case $|V(F)|-|V(F')|=0$ follows from Lemma \ref{lem:thinoctopusbase}.
So we may assume that $F' \neq F$ and that the lemma holds when the lexicographic order of $(|V(F)|-|V(F')|,|V(G)|)$ is smaller.

If $|V(G)| \leq |V(F)|$, then the rooted tree-decomposition whose underlying tree has two nodes with root bag $S$ and the other bag $V(G)$ is a thin $(S,|V(F)|)$-octopus of $G$.
So we may assume that $|V(G)|>|V(F)|$.

Since $F' \neq F$, there exists an edge $uv \in E(F)$ such that $u \in V(F')$ and $v \in V(F)-V(F')$.
Let $F''=F'+uv$.
Since some spanning subgraph of $G[S]$ is isomorphic to $F'$, there exists an isomorphism $\phi$ from $F'$ to a spanning subgraph of $G[S]$.

By Lemma \ref{lem:thinoctopuseasy}, if $N_G(V(C))\subsetneq S$ for every component $C$ of $G-S$, then there exists a thin $(S,|S|)$-octopus of $G$, and since $|S|\leq |V(F)|$, there exists a thin $(S,|V(F)|)$-octopus of $G$.
So we may assume that $N_G(V(C))=S$ for some component $C$ of $G-S$.
Then there exists a vertex $v^* \in V(G)-S$ such that $v^*\phi(u) \in E(G)$.
Let $S' = S \cup \{v^*\}$.
Then $G[S']$ contains a spanning subgraph of $G[S']$ isomorphic to $F''$.
Since $|V(F)|-|V(F'')| < |V(F)|-|V(F')|$, by the inductive hypothesis, there exists a thin $(S',|V(F)|)$-octopus $(T^1,\X^1)$ of $G$.

Let $T^2$ be the rooted tree obtained from $T^1$ by adding a new root $r^2$ adjacent to the root of $T^1$.
Let $X^2_{r^2}=S$ and, for every $t \in V(T^1)$, let $X^2_t=X^1_t$.
Let $\X^2=(X^2_t: t \in V(T^2))$.
Since $|S|=|S'|-1$, $(T^2,\X^2)$ is a (not necessarily thin) $(S,|V(F)|)$-octopus of $G$.
Since $|V(G)|>|V(F)| \geq |S'|$, we have that $S'$ is a bag of $(T^2,\X^2)$ that is not a subset of $S$ nor equal to $V(G)$.
So $(T^2,\X^2)$ is a non-trivial $(S,|V(F)|)$-octopus of $G$.

Let $(T^3,\X^3)$ be a non-trivial $(S,|V(F)|)$-octopus of $G$ such that the number of thick wrists is as small as possible, and subject to this, $|V(T^3)|$ is as small as possible.
Denote $\X^3$ by $(X^3_t: t \in V(T^3))$.
Let $W$ be the set of thick wrists of $(T^3,\X^3)$.
For every $t \in W$, let $Q_t$ be the set of children $c$ of $t$ with $|X^3_c| \geq |V(F)|+1$.

By Lemma \ref{lem:outopuslinked}, for every $t \in W$, there exists a set $\PP_t$ of $|S|=|X^3_t|$ disjoint paths in $G$ from $S$ to $X^3_t$.
For every $t \in W$ and $c \in Q_t$, let $G_{t,c}=G[X^3_t \cup X^3_c] \cup \bigcup_{P \in \PP_t}P \cup G[S]$, and let $G_{t,c}'$ be the graph obtained from $G_{t,c}$ by contracting each path in $\PP_t$ into its unique vertex in $X_t^3$. 
Then $G_{t,c}'[X^3_t]$ contains a spanning subgraph isomorphic to $F'$ for every $t \in W$ and $c \in Q_t$; moreover, $G_{t,c}'$ is a minor of $G$ and hence does not contain $F^+$ as a minor.

\begin{claim} \label{lem:thinoctopus:claim1}
We may assume that for every $t \in W$ and $c \in Q_t$, $|V(G_{t,c}')| < |V(G)|$.
\end{claim}

\begin{subproof}[Proof of Claim \ref{lem:thinoctopus:claim1}]   
Since $G_{t,c}'$ is obtained from the subgraph $G_{t,c}$ of $G$ by contracting each path in $\mathcal{P}_t$, we have $|V(G_{t,c}')| \leq |V(G)|$. 
If equality holds, then $V(G_{t,c}')=V(G)$, so $X^3_t=S$ and $X^3_c \supseteq V(G)-S$, which implies $X^3_{t'} \subseteq S$ for every $t' \in V(T^3)-\{t,c\}$.
Since $(T^3,\X^3)$ is non-trivial, we have $X^3_c \neq V(G)$, so restricting $(T^3,\X^3)$ to the two nodes $t$ and $c$ yields a non-trivial $(S,|V(F)|)$-octopus of $G$.
Since $(T^3,\X^3)$ was chosen to minimize $|V(T^3)|$, we have $V(T^3)=\{t,c\}$.
Since $X^3_c \supseteq V(G)-S$ and $X^3_c \neq V(G)$, there is a vertex $u \in S$ not in $X^3_c$. 
So no vertex of $G-S$ is adjacent in $G$ to $u$.
Hence $N_G(V(C)) \subsetneq S$ for every component $C$ of $G-S$.
By Lemma \ref{lem:thinoctopuseasy}, there exists a thin $(S,|S|)$-octopus of $G$, and since $|S| \leq |V(F)|$, there exists a thin $(S,|V(F)|)$-octopus of $G$.
\end{subproof}

By the inductive hypothesis, for every $t\in W$ and $c\in Q_t$, there exists a thin $(X^3_t,|V(F)|)$-octopus $(T^{t,c},\X^{t,c})$ of $G_{t,c}'$. Denote $\X^{t,c}$ by $(X_z^{t,c}:z\in V(T^{t,c}))$ and let $r^{t,c}$ denote the root node of $T^{t,c}$.
Let $(T^*,\X^*)$ be the $S$-rooted tree-decomposition of $G$ obtained by attaching $(T^{t,c},\X^{t,c})$ to $(T^3-\bigcup_{t\in W}Q_t,\X^3-\{X_c^3:t\in W,c\in Q_t\})$ along $X_t^3$ for each $t\in W$ and $c\in Q_t$.
It is easy to see that $(T^*,\X^*)$ is in fact a thin $(S,|V(F)|)$-octopus of $G$ since every bag of $(T^*,\X^*)$ of size at least $|V(F)|+1$ is a bag of $(T^{t,c},\X^{t,c})$ for some $t\in W$ and $c\in Q_t$ and $(T^{t,c},\X^{t,c})$ is a thin $(X_t^3,|V(F)|)$-octopus.
This completes the proof of the lemma.
\end{proof}

The following theorem proves Theorem \ref{thm:apexforest} since $|V(F)|=|V(F^+)|-1$.

\begin{theorem}
Let $F$ be a tree.
If $G$ is a graph that does not contain $F^+$ as a minor, then the tree-width of $G$ is at most $|V(F)|-1$.
\end{theorem}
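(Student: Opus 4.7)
The plan is to derive the theorem as a quick consequence of Lemma \ref{lem:thinoctopus}, applied with the smallest possible initial set $S$, namely a single vertex. Two preliminary reductions tidy things up. First, if $G$ is disconnected, I would apply the theorem to each component separately and glue the resulting decompositions together in the standard way, so we may assume $G$ is connected. Second, the case $|V(G)|\le|V(F)|$ is handled by the single-bag tree-decomposition, so we may assume $|V(G)|\ge|V(F)|+1\ge 2$.

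Now pick any vertex $u\in V(G)$, set $S=\{u\}$, and let $F'$ be the subtree of $F$ consisting of a single vertex; then $G[S]$ trivially contains a spanning subgraph isomorphic to $F'$. Lemma \ref{lem:thinoctopus} produces a thin $(S,|V(F)|)$-octopus $(T,\X)$ of $G$. The whole game is to show that this octopus has \emph{no wrists at all}: if that holds, every non-root bag has size at most $|V(F)|$, the root bag has size $|S|=1\le|V(F)|$, so the tree-width of $G$ is at most $|V(F)|-1$.

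To rule out wrists, suppose some wrist $p$ has a child $t$ with $|X_t|\ge|V(F)|+1$. The octopus property gives $|X_p|\le|S|=1$, and the thin condition forbids $|X_p|=|S|=1$, so $X_p=\emptyset$. This empty parent bag is the whole point: by the connectivity of occurrences of each vertex in a tree-decomposition, every edge of $G$ with one end in $X_t$ and the other outside $X_t$ would force that common endpoint to appear in $X_p$. Hence no vertex of $X_t$ has a neighbor in $V(G)\setminus X_t$, so $X_t$ is a union of connected components of $G$. Since $G$ is connected and $X_t\neq\emptyset$, this forces $X_t=V(G)$, so in particular $u\in X_t$; but $u\in X_r$ as well, and the same connectivity argument applied to $u$ demands $u\in X_p=\emptyset$, a contradiction.

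The only subtlety I anticipate is confirming that the thin property really does the heavy lifting in the regime $|S|=1$: it is precisely the combination of ``octopus'' (parent bag $\le|S|$) and ``thin'' (parent bag $\neq|S|$) that collapses the parent bag to empty, which in turn is the one ingredient the connectedness argument needs. Everything else — the reduction to connected $G$, the trivial small-$|V(G)|$ case, and the concluding estimate on the maximum bag size — is routine, so the proof should be short once Lemma \ref{lem:thinoctopus} is in hand.
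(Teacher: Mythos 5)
Your proposal is correct and takes essentially the same route as the paper: reduce to the connected case, take $S$ to be a single vertex, invoke Lemma~\ref{lem:thinoctopus} to get a thin $(S,|V(F)|)$-octopus, and observe that thinness combined with $|S|=1$ forces any wrist's bag to be empty, which contradicts connectivity of $G$; the paper states this final contradiction more tersely, but your elaboration of why an empty parent bag is incompatible with a nonempty root bag and a nonempty leaf bag is the same underlying argument.
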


\begin{proof}
Since the tree-width of $G$ is equal to the maximum of the tree-widths of its components, we may assume without loss of generality that $G$ is connected.
Let $S$ be a set consisting of a single vertex of $G$.
Then $G[S]$ is isomorphic to a subtree of $F$.
By Lemma \ref{lem:thinoctopus}, there exists a thin $(S,|V(F)|)$-octopus $(T,\X)$ of $G$.

We claim that $(T,\X)$ does not have any wrists. Suppose to the contrary; let $t$ be a wrist of $(T,\X)$ and let $c$ be a child of $t$ such that the bag at $c$ has size at least $|V(F)|+1$. Since $(T,\X)$ is thin, the bag at $t$ has size at most $|S|-1=0$.
Since $S \neq \emptyset$ is the root bag and $c$ is a non-root leaf whose bag is non-empty, this contradicts the assumption that $G$ is connected.

Since $(T,\X)$ has no wrists, every bag of $(T,\X)$ has size at most $|V(F)|$.
Hence, $(T,\X)$ is a tree-decomposition of $G$ with width at most $|V(F)|-1$.
\end{proof}

\section{Wheels}
For an integer $k\geq 3$, recall that a \emph{$k$-wheel} is a graph $C^+$ where $C$ is a cycle of length $k$. 
The following theorem implies Theorem \ref{thm:wheel} since if $H$ is a wheel, then $H$ is a $(|V(H)|-1)$-wheel.

\begin{theorem}
Let $k \geq 3$ be an integer.
Let $G$ be a graph that does not contain a $k$-wheel as a minor.
Let $C$ be either an edge of $G$ or a cycle in $G$ with $|V(C)| \leq k-1$.
Then there exists a $V(C)$-rooted tree-decomposition of $G$ with maximum bag size at most $\max\{\tfrac{3}{2}k-3,k\}$ (i.e.~width at most $\max\{\tfrac{3}{2}k-4,k-1\}$).
\end{theorem}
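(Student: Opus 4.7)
The plan is to argue by induction on the lexicographic order of $(k-1-|V(C)|,\ |V(G)|)$, mirroring the setup of Lemma~\ref{lem:thinoctopus}. Set $w := \max\{\tfrac{3}{2}k-3,\ k\}$. The base case $|V(G)| \le w$ uses the single-bag decomposition $\{V(G)\}$ rooted at $V(C)$, noting that $|V(C)| \le k-1 \le w$.

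For the inductive step, consider the components of $G - V(C)$. If $N_G(V(D)) \cap V(C) \subsetneq V(C)$ for every component $D$, then each such neighborhood has size at most $k-2$, and an easy reduction in the spirit of Lemma~\ref{lem:thinoctopuseasy} attaches to the root bag $V(C)$ a subtree obtained by recursing on $G[V(D) \cup (N_G(V(D)) \cap V(C))]$ rooted at an edge or short cycle extracted from $N_G(V(D)) \cap V(C)$, possibly after rerouting through $V(D)$. The harder case is when some component $D$ satisfies $N_G(V(D)) \cap V(C) = V(C)$.

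If $|V(C)| \le k-2$, I would extend $C$: find a vertex $v^* \in V(D)$ with two consecutive $V(C)$-neighbors (using a pigeonhole-type argument on the bipartite attachment together with the connectedness of $V(D)$), form the longer cycle $C'$ of length $|V(C)|+1 \le k-1$, and recurse on $G$ with $C'$ in place of $C$; the outer parameter $k-1-|V(C')|$ strictly decreases. If $|V(C)| = k-1$, I would find a separator $S \subseteq V(D)$ of size at most $\lfloor k/2 \rfloor - 2$ such that every component of $G[V(D)] - S$ has a neighborhood properly contained in $V(C) \cup S$; then the bag $V(C) \cup S$ of size at most $\tfrac{3}{2}k-3 = w$ serves as an intermediate bag, and induction applies to each piece using an edge or short cycle inside $V(C) \cup S$ as the new root. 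An uncrossing step analogous to Lemma~\ref{lem:outopuslinked} would be used to ensure that wrist-like bags of size $|V(C)|$ are linked to $V(C)$ via $|V(C)|$ disjoint paths, so that the sub-decompositions merge cleanly.

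The main obstacle is showing that such a separator $S$ always exists in this crux case. The contrapositive is a structural claim: if $V(D)$ admits no vertex cut of the required form of size at most $\lfloor k/2 \rfloor - 2$, then $G[V(C) \cup V(D)]$ contains a $k$-wheel minor. I expect this to follow from a Menger-based fan of $\lfloor k/2 \rfloor - 1$ disjoint paths from $V(C)$ into $V(D)$, combined with a vertex of $V(D)$ having two consecutive $V(C)$-neighbors, which jointly yield a $k$-cycle with an apex by contracting the fan appropriately. Pathological attachment patterns where no such vertex exists—where the bipartite attachment from $V(D)$ to $V(C)$ is forced into a ``non-consecutive'' pattern—must be ruled out by a finer case analysis, perhaps by rerouting through interior paths in $V(D)$ to synthesize the needed two-consecutive-neighbors structure. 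The constant $\tfrac{3}{2}$ in the bag-size bound is ultimately dictated by this connectivity threshold, and carrying out this dichotomy carefully is the technical heart of the argument.
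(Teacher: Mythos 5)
Your proposal takes a genuinely different route from the paper's, but it contains gaps that you yourself flag, and the central one is not merely a matter of technical care.

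The most serious issue is the dichotomy in the crux case $|V(C)| = k-1$: you claim that if there is no vertex cut $S \subseteq V(D)$ of size at most $\lfloor k/2 \rfloor - 2$ with the required separation property, then a $k$-wheel minor appears, via a ``Menger-based fan of $\lfloor k/2 \rfloor - 1$ disjoint paths from $V(C)$ into $V(D)$.'' But in this case a fan of $k-1 > \lfloor k/2\rfloor - 1$ disjoint paths from $V(C)$ into $V(D)$ already exists trivially (one edge per vertex of $C$, since $N_G(V(D)) \supseteq V(C)$), and it does not yield a $k$-wheel: the apex needs to be a \emph{single} branch set adjacent after contraction to all $k$ vertices of a $k$-cycle, and a fan of paths into a connected subgraph $D$ does not give you the additional vertex on the rim nor a connected ``hub'' on the other side of the fan that avoids the rim. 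So the claimed contrapositive is unsubstantiated, and the constant $\lfloor k/2\rfloor - 2$ has no visible justification. There is also a secondary gap in the case $|V(C)| \le k-2$: a vertex of $D$ with two consecutive $V(C)$-neighbors need not exist (for example, $C = v_1v_2v_3v_4$ and $D = u_1u_2$ with $u_1 \sim v_1, v_3$ and $u_2 \sim v_2, v_4$), and rerouting through interior paths of $D$ produces a cycle of uncontrolled length, so you cannot guarantee $|V(C')| \le k-1$. Finally, your induction order $(k-1-|V(C)|, |V(G)|)$ increases lexicographically when you recurse on a piece $G[V(D)\cup (N_G(V(D))\cap V(C))]$ with a root smaller than $V(C)$; the graph size must be the primary coordinate, as in the paper's $(|V(G)|,\, |V(G)|-|V(C)|)$.

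For contrast, the paper does not try to grow $C$ step by step or to cut $D$ with a small separator. After reducing to the $2$-connected case with $G-V(C)$ connected, it fixes an edge $v_1v_c$ of $C$ and chooses a path $P'$ from $v_1$ to $v_c$ through $G-V(C)$ so as to \emph{maximize} a largest component $M$ of $G-(V(C)\cup V(P'))$. The long cycle $C\cup P'-v_1v_c$ passes through all of $N_G(V(M))$, so $|N_G(V(M))|\le k-1$; the extremal choice of $P'$ then forces a ``non-crossing'' structure of attachments, and the $\tfrac32 k - 3$ bound comes from counting ``bad'' attachment vertices on $C$ and from a four-bag path-decomposition of $G[V(C)\cup A]$, not from a connectivity threshold in $D$.
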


\begin{proof}
We proceed by induction on the lexicographic order of $(|V(G)|, |V(G)|-|V(C)|)$. 
If $|V(G)|\leq \max\{\tfrac{3}{2}k-3,k\}$, then the rooted tree-decomposition whose underlying tree has two nodes with root bag $V(C)$ and the other bag $V(G)$ is a $V(C)$-rooted tree-decomposition of $G$ with maximum bag size at most $\max\{\tfrac{3}{2}k-3,k\}$. So we may assume that $|V(G)|>\max\{\tfrac{3}{2}k-3,k\}$ and that the theorem holds when the lexicographic order of $(|V(G)|,|V(G)|-|V(C)|)$ is smaller.
    \begin{claim}\label{claim:wheelconnectivity}
        We may assume that $G$ is 2-connected, $G-V(C)$ is connected, and every vertex in $C$ is adjacent in $G$ to some vertex in $G-V(C)$.
    \end{claim}
    \begin{subproof}[Proof of Claim \ref{claim:wheelconnectivity}]
        For every block $B$ of $G$ not containing $C$, let $C_B$ be an edge in $B$; for the block $B$ of $G$ containing $C$, let $C_B=C$.
        If $G$ is not 2-connected, then by the inductive hypothesis, each block $B$ of $G$ admits a $V(C_B)$-rooted tree-decomposition $(T^B,\X^B)$ with maximum bag size at most $\max\{\tfrac{3}{2}k-3,k\}$. By taking the disjoint union of these tree-decompositions and adding, for each cut-vertex $v$, a new node with bag $\{v\}$ and an edge joining this node to a node of $T^B$ whose bag contains $v$ for each block $B$ of $G$ containing $v$, we obtain a $V(C)$-rooted tree-decomposition of $G$ with maximum bag size at most $\max\{\tfrac{3}{2}k-3,k\}$.
        So we may assume that $G$ is 2-connected.

        Suppose that $G-V(C)$ is not connected.
        Let $M_1,M_2,...,M_t$ be the components of $G-V(C)$, where $t \geq 2$.
        For each $i \in [t]$, $G[V(C)\cup V(M_i)]$ is a non-spanning subgraph of $G$; hence, $G[V(C) \cup V(M_i)]$ does not contain a $k$-wheel as a minor and, by the inductive hypothesis, admits a $V(C)$-rooted tree-decomposition $(T^i,\mathcal{X}^i)$ with maximum bag size at most $\max\{\tfrac{3}{2}k-3,k\}$.
        Identifying the roots of $(T^1,\mathcal{X}^1),\dots,(T^t,\mathcal{X}^t)$ (whose corresponding bags are all equal to $V(C)$), we obtain the desired $V(C)$-rooted tree-decomposition of $G$.
        So we may assume that $G-V(C)$ is connected.
        
        Suppose there exists a vertex $v$ in $C$ that is not adjacent in $G$ to any vertex in $G-V(C)$.
        Since $G$ is 2-connected, we have $|V(C)| \geq 3$, so $C$ is a cycle.
        Let $G_v$ and $C_v$ be the graph and the cycle or edge obtained from $G$ and $C$ by contracting an edge of $C$ incident to $v$, respectively.
        Then $G_v$ is a minor of $G$ and $|V(G_v)|<|V(G)|$; hence, $G_v$ does not contain a $k$-wheel as a minor and, by the inductive hypothesis, $G_v$ admits a $V(C_v)$-rooted tree-decomposition $(T',\mathcal{X}')$ with maximum bag size at most $\max\{\tfrac{3}{2}k-3,k\}$.
        Adding a new node adjacent to the root of $(T',\mathcal{X}')$ as the new root with corresponding bag $V(C)$, we obtain the desired $V(C)$-rooted tree-decomposition of $G$.
        So we may assume that every vertex in $C$ is adjacent in $G$ to some vertex in $G-V(C)$.
    \end{subproof}

Let $c=|V(C)|$.

\begin{claim}
    \label{claim:choosePnotwholegraph}
    We may assume that there exists an edge $v_1v_c$ of $C$ and a path $P'$ in $G$ between $v_1$ and $v_{c}$ internally disjoint from $V(C)$ with $|V(P')|\geq 3$ such that $V(C)\cup V(P')\neq V(G)$.
\end{claim}
\begin{proof}[Proof of Claim \ref{claim:choosePnotwholegraph}]
    Let $v_1v_c$ be an edge of $C$.
    By Claim \ref{claim:wheelconnectivity}, $G-V(C)$ is connected and every vertex in $C$ is adjacent in $G$ to some vertex in $G-V(C)$, so there exists a path $P_{1c}'$ in $G$ between $v_1$ and $v_c$ internally disjoint from $V(C)$ with $|V(P_{1c}')|\geq 3$.
Choose $v_1v_c$ and $P_{1c}'$ so that $P_{1c}'$ is as short as possible. Then $v_1$ has a unique neighbor in $P_{1c}'-V(C)$ and $P_{1c}'-V(C)$ is an induced path.

We are done if $V(C)\cup V(P_{1c}')\neq V(G)$, so we may assume $V(C)\cup V(P_{1c}')= V(G)$.
Let $u_1$ be the unique neighbor of $v_1$ in $P_{1c}'-V(C)=G-V(C)$.
Let $u_1,u_2,\dots,u_\ell$ denote the vertices of $P_{1c}'$ in this order.

If $c=2$, then the choice of $P_{1c}'$ implies that $G$ is a cycle, so there exists a $V(C)$-rooted path-decomposition of $G$ with maximum bag size at most $3 \leq \max\{\tfrac{3}{2}k-3,k\}$.
So we may assume $c \geq 3$ and that $C$ is a cycle.
Let $v_1,v_2,\dots,v_c,v_1$ denote the vertices of $C$ in this order.

By Claim \ref{claim:wheelconnectivity}, every vertex $v_i$ in $C$ has at least one neighbor in $G-V(C)=P_{1c}'-V(C)$. By our choice of $v_1v_c$ and $P_{1c}'$, we have for every edge $v_iv_{i+1}$ of $C$ (where $v_{c+1}=v_1$) that each $v_i$ and $v_{i+1}$ has a unique neighbor $u$ and $u'$ respectively in $G-V(C)$, and moreover $\{u,u'\}=\{u_1,u_\ell\}$. In other words, since $u_1$ is the unique neighbor of $v_1$, we have that for all $i\in[c]$, if $i$ is odd, then $u_1$ is the unique neighbor of $v_i$ in $G-V(C)$, and if $i$ is even, then $u_\ell$ is the unique neighbor of $v_i$ in $G-V(C)$.

Now the bags $V(C),V(C) \cup \{u_1\}, (V(C)-\{v_i: i \in [c], i$ is odd$\}) \cup \{u_1,u_\ell\}, \allowbreak \{u_1,u_2,u_\ell\}, \{u_2,u_3,u_\ell\},\dots,\{u_{\ell-2},u_{\ell-1},u_\ell\}$ form a $V(C)$-rooted path-decomposition of $G$ with maximum bag size at most $\max\{|V(C)|+1,3\} \leq k$, as desired.
\end{proof}

We choose $v_1, v_{c}$, and $P'$ satisfying Claim \ref{claim:choosePnotwholegraph} so that $|V(M)|$ is maximized, where $M$ is a largest (in terms of the number of vertices) component of $G-(V(C) \cup V(P'))$.
Let $P$ denote the path $P'-\{v_1,v_{c}\}$.
Let $A = V(P) \cap N_G(V(M))$.
Note that $A \neq \emptyset$ because $G-V(C)$ is connected by Claim \ref{claim:wheelconnectivity}.

Moreover, $C\cup P'$ contains a cycle with vertex set $V(C)\cup V(P') \supseteq N_G(V(M))$. Since $G$ does not contain a $k$-wheel as a minor, we have $|N_G(V(M))| \leq k-1$.

    \begin{claim} \label{claim_non_crossing}
        There does not exist a path $Q$ in $G-E(C)$  between two distinct vertices $x,y$ of $P'$ such that $Q$ is internally disjoint from $V(C) \cup V(P') \cup V(M)$ and some vertex in $A$ is an internal vertex of the subpath $P_{xy}$ of $P'$ between $x$ and $y$.
    \end{claim}

    \begin{subproof}[Proof of Claim \ref{claim_non_crossing}]
        Suppose to the contrary that such a path $Q$ exists.
        Let $P''$ be the path obtained from $P \cup Q$ by deleting the internal vertices of $P_{xy}$.
        Then $v_1,v_c$, and $P''$ satisfy Claim \ref{claim:choosePnotwholegraph} and there is a component of $G-(V(C) \cup V(P''))$ containing $M$ and a vertex in $A$, contradicting the maximality of $M$.
    \end{subproof}

A {\it closed interval} is a subpath $Q$ of $P$ between two distinct vertices of $A$ with length at least two such that no internal vertex of $Q$ is in $A$.
An {\it open interval} is the subpath of a closed interval induced by its internal vertices.
For every open interval $I$, let $Y_I$ be the component of $G-(V(C) \cup A)$ containing $I$; by Claim \ref{claim_non_crossing}, each $Y_I$ is disjoint from $V(M)$ and from $V(P)-V(I)$, and we have $N_G(V(Y_I)) \subseteq V(C) \cup A_I$, where $A_I$ is the set of endpoints of the unique closed interval containing $I$.
In particular, if $I_1$ and $I_2$ are distinct open intervals, then $Y_{I_1}$ and $Y_{I_2}$ are disjoint.

    \begin{claim} \label{claim_root_cycle_interval}
        For every open interval $I$, we have $2 \leq |N_G(V(Y_I))| \leq k-1$ and there exists a cycle $C_I$ in $G[V(C) \cup (V(P)-V(I)) \cup V(M)]$ 
        such that $N_G(V(Y_I))\subseteq V(C_I)$.
    \end{claim}

    \begin{subproof}[Proof of Claim \ref{claim_root_cycle_interval}]
        Since $G$ is 2-connected, we have $2 \leq |N_G(V(Y_I))|$.
        Note that there exists a path $M_I$ in $G$ between the two vertices in $A_I$ such that all internal vertices of $M_I$ are in $M$.
        If $C$ is an edge, then let $C'=C$; otherwise, let $C'=C-v_1v_c$.
        Then $C_I=C'+(P'-V(I))+M_I$ is a cycle in $G[V(C) \cup (V(P)-V(I)) \cup V(M)]$, and 
        $$N_G(V(Y_I))\subseteq V(C)\cup A_I\subseteq V(C)\cup (V(P')-V(I))\subseteq V(C_I).$$
        The graph obtained from $G[V(Y_I)\cup N_G(V(Y_I))] \cup C_I$ by contracting $Y_I$ into a single vertex contains a $|N_G(V(Y_I))|$-wheel.
        Since $G$ does not contain a $k$-wheel as a minor, we have $|N_G(V(Y_I))| \leq k-1$.
    \end{subproof}

For every open interval $I$, let $G_I = G[V(Y_I) \cup N_G(V(Y_I))]$. Let $C_I$ be a cycle in $G[V(C)\cup(V(P)-V(I))\cup V(M)]$ such that $N_G(V(Y_I))\subseteq V(C_I)$ as in Claim \ref{claim_root_cycle_interval}.
Let $C_I'$ be the cycle or the edge obtained from $C_I$ by contracting a subset of its edges so that $V(C_I')=N_G(V(Y_I))$; let $G_I'$ be the graph obtained from $G_I\cup C_I$ by contracting the same set of edges, so that $V(G_I')=V(G_I)$. Note that $G_I'$ is a minor of $G$, so $G_I'$ does not contain a $k$-wheel as a minor.

    \begin{claim} \label{claim_interval_decomp}
        For every open interval $I$, there exists a $N_G(V(Y_I))$-rooted tree-decomposition of $G_I'$ with maximum bag size at most $\max\{\tfrac{3}{2}k-3,k\}$.
    \end{claim}

    \begin{subproof}[Proof of Claim \ref{claim_interval_decomp}]
        By Claim \ref{claim_root_cycle_interval}, we have $|V(C_I')| \leq k-1$, and $C_I'$ is either a cycle or an edge.
        Since $V(M) \cap V(G_I) = \emptyset$, we have $|V(G_I')|<|V(G)|$.
        Since $G_I'$ does not contain a $k$-wheel as a minor, by the inductive hypothesis, $G_I'$ admits the desired $N_G(V(Y_I))$-rooted tree-decomposition.
    \end{subproof}

Let $G^* = G[V(C) \cup A]$, that is, $G^*$ is obtained from $G[V(C) \cup V(P)]$ by deleting every open interval. Note that $V(G^*)=V(C)\cup A$.

    \begin{claim} \label{claim_reduce_to_central_2}
        It suffices to show that there exists a $V(C)$-rooted tree-decomposition of $G^*$ with maximum bag size at most $\max\{\tfrac{3}{2}k-3,k\}$ such that for every component $Q$ of $G-V(G^*)$, there is a bag containing $N_G(V(Q))$. 
    \end{claim}

    \begin{subproof}[Proof of Claim \ref{claim_reduce_to_central_2}]
        Suppose that $G^*$ admits a $V(C)$-rooted tree-decomposition $(T,\X)$ as in the claim.
        Then for every component $Q$ of $G-V(G^*)$, there is a bag of $(T,\X)$ containing $N_G(V(Q))$; by possibly adding leaf nodes, we may assume without loss of generality that for every component $Q$ of $G-V(G^*)$, there is a bag $X_Q$ of $(T,\X)$ equal to $N_G(V(Q))$. 
        Note that $|N_G(V(Q))|\geq 2$ since $G$ is 2-connected.

        For every open interval $I$, there exists a $N_G(V(Y_I))$-rooted tree-decomposition $(T^I,\X^I)$ of $G_I'$ with maximum bag size at most $\max\{\tfrac{3}{2}k-3,k\}$ by Claim \ref{claim_interval_decomp}; let $X_r^I$ denote its root bag, which is equal to $N_G(V(Y_I))$.
        
        If $C$ is an edge, then let $C'=C \cup P'$; if $C$ is a cycle, then let $C'=(C-v_1v_{c})\cup P'$.

        For every component $Q$ of $G-V(G^*)$ such that $Q\neq Y_I$ for every open interval $I$, we know that $N_{G}(V(Q)) \subseteq V(G^*)= V(C) \cup A$ and $C'$ is a cycle disjoint from $Q$ such that $V(C)\cup A\subseteq V(C')$. Since $G$ does not contain a $k$-wheel as a minor, we have $|N_{G}(V(Q))| \leq k-1$; let $C_Q'$ be the cycle or the edge obtained from $C'$ by contracting a subset of its edges so that $V(C_Q')=N_G(V(Q))$; let $G_Q$ be the graph obtained from $G[V(Q) \cup N_G(V(Q))]\cup C'$ by contracting the same set of edges. 

        Note that $M$ is a component of $G-V(G^*)$ such that $M\neq Y_I$ for every open interval $I$, so $C_M'$ is defined.
        If $M$ is not the unique component of $G-V(G^*)$ or if $|V(C_M')|<|V(C')|$, then for every component $Q$ of $G-V(G^*)$ such that $Q\neq Y_I$ for every open interval $I$, by the inductive hypothesis, $G_Q$ admits a $N_G(V(Q))$-rooted tree-decomposition $(T^Q,\X^Q)$ with maximum bag size at most $\max\{\tfrac{3}{2}k-3,k\}$, and we let $X_r^Q$ denote its root bag, which is equal to $N_G(V(Q))$. 
        Then by attaching $(T^I,\X^I)$ and $(T^Q,\X^Q)$ to $(T,\X)$ along $X_I=X_r^I$ and  $X_Q=X_r^Q$ respectively for each open interval $I$ and each component $Q$ of $G-V(G^*)$ such that $Q\neq Y_I$ for every open interval $I$,  we obtain the desired $V(C)$-rooted tree-decomposition of $G$ with maximum bag size at most $\max\{\tfrac{3}{2}k-3,k\}$.
        
        So we may assume that $M$ is the unique component of $G-V(G^*)$ and that $|V(C_M')|=|V(C')|$.
        The former implies $V(G)=V(C) \cup V(P) \cup V(M)$, and the latter implies $A=V(P)$ and $V(C)\subseteq N_G(V(M))$, hence $V(P) \cup V(C) = N_G(V(M))$.
        Thus, $C'$ is a cycle on at most $k-1$ vertices. Moreover, we have $N_G(V(M))\subseteq V(C')$ and $|V(C')| = |V(C)|+|A| > |V(C)|$, so $|V(G)|-|V(C')|<|V(G)|-|V(C)|$. 
        By the inductive hypothesis, $G$ admits a $V(C')$-rooted tree-decomposition with maximum bag size at most $\max\{\tfrac{3}{2}k-3,k\}$.
        Since $V(C) \subseteq V(C')$, $G$ admits a $V(C)$-rooted tree-decomposition with maximum bag size at most $\max\{\tfrac{3}{2}k-3,k\}$.
    \end{subproof}

By Claim \ref{claim_reduce_to_central_2}, it suffices to show that there exists a tree-decomposition of $G^*$ with maximum bag size at most $\max\{\tfrac{3}{2}k-3,k\}$ such that some bag contains $V(C)$ and for every component $Q$ of $G-V(G^*)$, there is a bag containing $N_G(V(Q))$. 
If $|A|=1$, then $|V(G^*)| = |V(C)|+1 \leq k$, and the single bag $V(G^*)$ forms such a tree-decomposition of $G^*$.

So we may assume $|A|\geq 2$. 
Let $a$ be the vertex in $A$ closest to $v_1$ on the path $P'$, and let $a'$ be the vertex in $A$ closest to $v_{c}$ on $P'$. Since $|A|\geq 2$, we have $a\neq a'$.

A {\it jump} is a path in $G$ from a vertex in $V(C)$ to a vertex in $A$ internally disjoint from $V(C) \cup A$.
Note that a jump may be a single edge.

For every $r \in V(C)$, let \[S_r = \{x \in A:\text{ there exists a jump $Q$ from $r$ to $x$ such that }Q \cap M=\emptyset\}.\]
Let $S = \{x \in V(C): S_x \neq \emptyset\}$.
Note that $\{v_1,v_{c}\}\subseteq S$ since the two edges of $P'$ incident with $\{v_1,v_c\}$ are both jumps.
We say that a vertex $r$ in $C$ is {\it bad} if $S_r \not \subseteq \{a,a'\}$.
\begin{claim}
    \label{claim:badneighbornotS}
    Let $r\in V(C)$ and let $r'$ be a neighbor of $r$ in $C$. If $r$ is bad, then $S_{r'}=\emptyset$.
\end{claim}
\begin{subproof}[Proof of Claim \ref{claim:badneighbornotS}]
    Suppose to the contrary that $r$ is bad and $S_{r'}\neq\emptyset$. 
    Then there exists $b\in A-\{a,a'\}$ and $b'\in A$ such that there exists a jump $Q$ from $r$ to $b$ and a jump $Q'$ from $r'$ to $b'$ such that $Q\cap M=Q'\cap M=\emptyset$. Then, in the union of $Q, Q'$, and the subpath of $P$ between $b$ and $b'$, there is a path $P^*$ between $r$ and $r'$ disjoint from $M$ such that there is a component of $G-V(P^*)$ containing $M$ and at least one vertex in $\{a,a'\}$, contradicting the maximality of $M$ in our choice of $v_1,v_c$, and $P'$.
\end{subproof}
In particular, no two bad vertices are adjacent in $C$.

    \begin{claim} \label{claim_bad_counts}
        There are at most $\max\{0,\lceil \frac{k-5}{2} \rceil\}$ bad vertices in $C-N_G(V(M))$.
    \end{claim}

    \begin{subproof}[Proof of Claim \ref{claim_bad_counts}]
        Since $\{v_1,v_c\} \subseteq S$, by Claim \ref{claim:badneighbornotS}, neither $v_1$ nor $v_c$ is bad and the neighbors of $v_1$ or $v_c$ not in $\{v_1,v_c\}$ are also not bad. 
        In particular, there are no bad vertices when $|V(C)|\leq 4$.
        So we may assume $|V(C)|\geq 5$ and that the bad vertices are contained in a subpath of $C$ on at most $|V(C)|-4$ vertices.
        Since no two bad vertices are adjacent in $C$ by Claim \ref{claim:badneighbornotS}, it follows that there are at most $\lceil \frac{|V(C)|-4}{2} \rceil \leq \lceil \frac{k-5}{2} \rceil$ bad vertices in $C-N_G(V(M))$.
    \end{subproof}

We now construct a tree-decomposition of $G^*$ satisfying the conditions of Claim \ref{claim_reduce_to_central_2}.

First suppose that either $k \geq 8$ or $|V(C)| \leq k-2$.
Let $T$ be the path on four nodes $t_1, t_2, t_3, t_4$ in this order. Let
\begin{align*}
    X_{t_1}&= V(C)\\
    X_{t_2}&= V(C)\cup \{a,a'\}\\
    X_{t_3}&= \{r\in V(C): r\text{ is bad or }r\in N_G(V(M))\} \cup \{a,a'\}\\
    X_{t_4}&= \{r\in V(C): r\text{ is bad or }r\in N_G(V(M))\} \cup A
\end{align*}
Let $\X=(X_{t_1},X_{t_2},X_{t_3},X_{t_4})$. Then $(T,\X)$ is a tree-decomposition of $G^*$. 
Indeed, since $V(G^*)=V(C)\cup A$, we have $\bigcup_{i\in[4]}X_t = V(G^*)$. 
Let $e$ be an edge of $G^*$ and let $r,x$ denote its ends. 
If $r,x\in V(C)$, then $X_{t_1}$ (and $X_{t_2})$ contains both ends of $e$. If $r,x\in A$, then $X_{t_4}$ contains both ends of $e$. So we may assume $r\in V(C)$ and $x\in A$. 
If $x\in \{a,a'\}$, then $X_{t_2}$ contains both ends of $e$; otherwise, we have $x \not\in \{a,a'\}$, hence $r$ is bad and $X_{t_4}$ contains both ends of $e$.
Lastly, it is easy to see that for every vertex $v\in V(G^*)$, the set $\{t\in V(T):v\in X_t\}$ induces a subpath of $T$.

We now show that $(T,\X)$ has maximum bag size at most $\max\{\tfrac{3}{2}k-3,k\}$.
We have $|X_{t_1}| =|V(C)|\leq k-1$.
If $k \geq 8$, then $|X_{t_2}| \leq k+1 \leq \tfrac{3}{2}k-3$; 
if $|V(C)| \leq k-2$, then $|X_{t_2}| \leq k$.
Note that $X_{t_3}\subseteq X_{t_4}$ since $\{a,a'\}\subseteq A$.
By Claim \ref{claim_bad_counts}, and since $A=V(P)\cap N_G(V(M))$ and $N_G(V(M))\subseteq V(C)\cup V(P)$,  we have 
\begin{align*}
    |X_{t_3}|\leq |X_{t_4}|&\leq \max\{0,\lceil \tfrac{k-5}{2} \rceil\} + |V(C)\cap N_G(V(M))| + |V(P)\cap N_G(V(M))|\\
    &= \max\{0,\lceil \tfrac{k-5}{2} \rceil\} + |N_G(V(M))|\\
    &\leq \max\{0,\lceil \tfrac{k-5}{2} \rceil\}+k-1\\
    &\leq \tfrac{3}{2}k-3.
\end{align*}
By the definition of jumps, for every component $Q$ of $G-V(G^*)$, the bag $X_{t_4}$ contains $N_G(V(Q))$. 
Therefore, we obtain a desired tree-decomposition of $G^*$.

Now suppose that $k \leq 7$ and $|V(C)|=k-1$.
So $|V(C)| = k-1 \leq 6$.

If there exists $a'' \in \{a,a'\}$ such that there exists $v \in V(C)-N_G(V(M))$ with $S_v= \{a''\}$, then we modify the above tree-decomposition by changing $X_{t_1}$ to be $V(C) \cup \{a''\}$ and changing $X_{t_2}$ to be $(V(C) \cup \{a,a'\})-\{v\}$, to obtain a tree-decomposition of $G^*$ with maximum bag size at most $\max\{k,\tfrac{3}{2}k-3\}$ such that some bag contains $V(C)$ and for every component $Q$ of $G-V(G^*)$, there is a bag containing $N_G(V(Q))$. 

So we may assume that for every $v \in V(C)-N_G(V(M))$, either $v$ is bad or $S_v = \{a,a'\}$.
As shown in the proof of Claim \ref{claim_bad_counts}, no vertex in $\{v_1,v_{c}\} \cup N_{C}(\{v_1,v_{c}\})$ is bad.
By our choice of $v_1,v_c$, and $P'$, we know that $S_{v} \neq \{a,a'\}$ for every $v \in \{v_1,v_{c}\} \cup N_{C}(\{v_1,v_{c}\})$.
Thus we have $\{v_1,v_{c}\} \cup N_{C}(\{v_1,v_{c}\}) \subseteq N_G(V(M))$, which implies $|N_G(V(M))\cap V(C)|\geq\min\{4,|V(C)|\}=\min\{4,k-1\}$. Since $a\neq a'$ and $|N_G(V(M))|\leq k-1\leq 6$, we have
\begin{align*}
    2\leq |A|
    = |N_G(V(M))|-|N_G(V(M))\cap V(C)|
    \leq k-1 - \min\{4,k-1\}
    \leq 2,
\end{align*}
hence equality holds which implies $k=7$ and $|N_G(V(M))\cap V(C)|=4$.

This implies $|V(C)|=k-1=6$, $|V(C)-N_G(V(M))|=2$, and that the two vertices in $V(C)-N_G(V(M))$ are adjacent.
Since $|A|=2$, there is no bad vertex, so the two vertices $x,x'$ in $V(C)-N_G(V(M))$, which are adjacent in $C$, satisfy $S_x=\{a,a'\}=S_{x'}$.
The union of two jumps from $x$ and $x'$ to $a$ contains a path $P^*$ between $x$ and $x'$ disjoint from $M$ such that there is a component in $G-V(P^*)$ containing $M$ and $a'$, contradicting the maximality of $M$ in our choice of $v_1, v_c$ and $P'$. 
This completes the proof of the theorem. 
\end{proof}

\bigskip

\bigskip

\noindent{\bf Acknowledgement:}
This paper was partially written when the first author visited Institute of Mathematics, Academia Sinica, Taiwan hosted by Bruce Reed. 
He is thankful for their hospitality.

\bibliography{ref}
\bibliographystyle{plain}

\end{document}